\DeclareMathOperator{\sinc}{sinc}
\DeclareMathOperator{\dom}{dom \, \mathcal{E}}
\DeclareMathOperator{\domLap}{dom \, \Delta}
\DeclareMathOperator{\R}{\mathbb{R}}
\DeclareMathOperator{\Z}{\mathbb{Z}}
\newtheorem{thm}{Theorem}[section]
\newtheorem{defn}[thm]{Definition}
\newtheorem{lem}[thm]{Lemma}
\newtheorem{cor}[thm]{Corollary}
\newtheorem{con}[thm]{Conjecture}
\author{Robert J. Ravier}
\address{Department of Mathematics \\ Malott Hall \\ Cornell University \\
Ithaca, NY 14853}
\curraddr{Mathematics Department \\ Physics Building \\ Duke University \\ Durham, NC 27708}
\email{robert.ravier@duke.edu}
\thanks{The first author was supported in part by the National Science Foundation, grant DMS-0739164.}
\author{Robert S. Strichartz}
\address{Department of Mathematics \\ Malott Hall \\ Cornell University \\
Ithaca, NY 14853}
\email{str@math.cornell.edu}
\thanks{The second author was supported in part by the National Science Foundation, grant DMS-1162045.}
\subjclass[2010]{28A80}
\keywords{Sierpinski gasket, sampling theory, average values on cells, spectral decimation, bandlimited functions}
\begin{document}

\title[Sampling Theory with Average Values on $SG$]{Sampling Theory with Average Values on the Sierpinski Gasket}

\begin{abstract}
\noindent In the case of some fractals, sampling with average values on cells is more natural than sampling on points.  In this paper we investigate this method of sampling on $SG$ and $SG_{3}$.  In the former, we show that the cell graph approximations have the spectral decimation property and prove an analog of the Shannon sampling theorem. We also investigate the numerical properties of these sampling functions and make conjectures which allow us to look at sampling on infinite blowups of $SG$.  In the case of $SG_{3}$, we show that the cell graphs have the spectral decimation property, but show that it is not useful for proving an analogous sampling theorem. 
\end{abstract}

\maketitle 
\section{Introduction}

Recall that a bandlimited function $f: \mathbb{R} \to \mathbb{R}$ with bandlimit $B$ is a function whose Fourier transform $\hat{f}$ is compactly supported in $[-B,B].$ The classical sampling theorem of Shannon et al. says that such a function $f$ may be explicitly reconstructed from its values $\{ f(k\delta)\vert k \in \mathbb{Z}\}$ for $\delta < \frac{1}{2B}.$ This reconstruction is given in terms of translates of the sinc function $\sinc(x) = \frac{\sin(\pi x)}{\pi x}$.

In this paper, we are interested in analogs of the classical sampling theorem for self-similar subsets of $\mathbb{R}^{2}$; by this, we mean subsets that are equal to a finite union of their images under contractive similarities. Classic examples of such sets include fractals such as the Sierpinski gasket (from here on out denoted $SG$), and an analog of the Sierpinski gasket known as $SG_{3}$. These fractals have well-developed theories for Laplacians (see \cite{kigami2001analysis, MR2246975, drenning2009spectral}) which allow us to do analysis on these sets. We proceed as with the classical case, saying that a function $f$ on either fractal is {\bf{bandlimited}} if its expansion as an infinite series in terms of eigenfunctions $\{ u_{k} \}$ of the fractal's Laplacian $\Delta$ is actually a finite sum where the eigenvalues $\lambda_{k}$ satisfy $\lambda_{k} \le B$, with $B$ being the bandlimit, i.e. the max of the $| \lambda_{k}|$ such that $u_{k}$ appears in the eigenfunction expansion of $f.$ However, in this setting, instead of sampling values of the function at a discrete set of points, we assume instead that we are given the average values of the bandlimited function $f$ over a collection of special subsets, called $m$-cells, that give a natural decomposition of the fractal, where the number of $m$-cells needed for such decomposition increase exponentially as $m$ increases. Geometrically, averages over $m$-cells are more natural samples than a discrete set of points. Sampling with average values in the context of the line is also of great interest, see \cite{olevskii2015local} and the references therein. One may also see \cite{oberlin2003sampling} for a discussion of sampling in the vertex case. The vertex case discussed in \cite{oberlin2003sampling} and the cell case discussed in this paper are very different; the vertex case considers eigenfunctions corresponding to the Dirichlet spectrum of the Laplacian, whereas the cell case discussed in this paper will consider eigenfunctions corresponding to the Neumann spectrum. Also, the appropriate bandlimits in the two cases are different.

An important property of $SG$ and $SG_{3}$ is that the spectra of their respective natural Laplacians are describable explicitly by the method of spectral decimation introduced by Fukushima and Shima \cite{fukushima1992spectral}, which relates eigenfunctions and eigenvalues of the Laplacians on $SG$ and $SG_{3}$ with eigenfunctions and eigenvalues of the discrete Laplacians of their graph approximations. The authors of \cite{oberlin2003sampling} showed how to use the spectral decimation method to obtain a sampling theorem involving discrete point samples on the vertices of the graph approximations of $SG.$ On the other hand, it was shown in \cite{MR1867899} that the Laplacian on $SG$ is also definable as a limit of graph Laplacians on the graphs whose vertices correspond to the $m$-cells of $SG$ with edges between vertices if two nonidentical $m$-cells share a vertex in common. The main technical result of this paper is to show that the method of spectral decimation is also valid for the sequence of cell graph approximations. This leads directly to a sampling theorem for average value samples.

In contrast, it turns out that despite $SG_{3}$ having graph approximations with an explicit spectral decimation process (see \cite{drenning2009spectral}), as well as approximations of its Laplacian in terms of discrete Laplacian on cell graph approximations akin to those for $SG$ (see \cite{donglei2005laplacian}), there is no useful spectral decimation for the standard cell graph Laplacian. In fact, we will show that there is no such Laplacian for the cell graph approximations that results in a sampling theorem like that for $SG.$

The paper is organized as follows. In section 2 we summarize the necessary theory of the Kigami Laplacian on $SG$ needed to prove our sampling theorem for $SG.$ In section 3 we describe the cell graph approximations $\Gamma_{m}$ of $SG,$ their spectral decimation results, and their eigenbases. In section 4 we prove the sampling theorem for $SG$. In section 5 we present numerical data on the cardinal interpolants, and give some conjectures concerning possible exponential localization. This is quite a contrast to the poor localization of the sinc function. We show how the conjectures imply a sampling theorem on infinite blowups of $SG$. In section 6 we present the negative results for $SG_{3}$ .

\section{$SG$ Preliminaries}
In this section, we summarize all relevant results from prior work. Unless otherwise noted, full details can be found in \cite{MR2246975}.
\subsection{Self-Similar Structures, the Construction of $SG,$ and Cells}

We begin with a definition.

\begin{defn}
We say that a connected, compact set of $\mathbb{R}^{d}$ $K$ has a {\bf{self-similar structure}} if there exist a finite number of homeomorphisms $F_{1},...,F_{n}$ with $F_{i}: K \to W_{i}$ such that $W_{i} \subset K$ and $\bigcup_{i=1}^{n} W_{i} = K.$
\end{defn}

\noindent Observe that the unit cube in $\mathbb{R}^{d}$ has a self-similar structure. To construct a more topologically interesting subset of $\mathbb{R}^{2},$ we proceed as follows: let $q_0, q_1, q_2$ denote the vertices of an equilateral triangle in $\mathbb{R}^{2}.$ For simplicity, we let $q_{0} = (0,0),$ $q_{1} = \left(\frac{1}{2}, \frac{\sqrt{3}}{2} \right),$ and $q_{2} = (1,0).$ Define $F_{i}: \mathbb{R}^2 \rightarrow \mathbb{R}^2$ by

$$F_{i}(x) = \frac{1}{2}(x-q_{i}) + q_{i}. \eqno(2.1)$$

\noindent for $i = 0, 1, 2.$ 

\begin{defn}
The {\bf{Sierpinski gasket}} $SG$ is the unique nonempty compact set satisfying 
$$SG = \bigcup_{i =1}^3 F_{i}(SG).$$
\end{defn}

To construct (and do computations on) $SG,$ we use graph approximations. First, some notation. We define a {\bf{word of length $n$}}, $(w_{1}, ..., w_{n})$ to simply be an element of $\mathbb{Z}_{3}^{n}$. Then, we say that $F_{w} = F_{w_{n}} \circ ... \circ F_{w_{1}}$. Let $\beta_{0}$ be the graph with vertices $V_{0} = \{ q_{0}, q_{1}, q_{2} \}$ and edges between $q_{i}$ and $q_{j}$ for $i, j = 0,1,2, i \neq j.$ In other words, $\beta_{0}$ is the graph representing the equilateral triangle $T$ determined by $q_{0}, q_{1},$ and $q_{2}.$ We then inductively define the graphs $\beta_{m}.$ For $m > 0$ an integer, we let

$$V_{m} = \bigcup_{i=0}^{2} F_{i}(V_{m-1}).$$

\noindent Then, we define $\beta_{m}$ to be the graph with vertices $V_{m}$ such that $x, y \in V_{m}$ are connected by exactly one edge if and only if $x \neq y$ and $x,y \in F_{w}(T)$ for some word $w$ of length $m,$ i.e. $x$ and $y$ are in the same $m$-cell. Examples are shown in Figure 1. With this in mind, we let $V_{\ast} = \lim_{m \to \infty} V_{m}$ be the vertices of the limiting graph $\beta_{\ast}$ of the $\beta_{m}.$ Then $SG$ is the completion of $V_{\ast}$ in $\R^{2}.$ 

We now define some important terminology. We say that the set $K$ is an {\bf{$m$-cell of $SG$}} if $K = F_{w}(SG)$ for some word $w \in \mathbb{Z}_{3}^{m}.$ Also, for $n \geq m,$ we say that the graph $W$ is an $m$-cell of $\beta_{n}$ if $W = F_{v}(\beta_{n-m})$ for some word $v \in \mathbb{Z},$ We will frequently deal with $m$-cells on $\beta_{m}$ in our computations.

Before continuing on to the next section, we make two important observations: first, for $m \geq 1$ and $v \in V_{m} \backslash V_{0},$ $v$ belongs to two different $m$-cells of $SG.$ In particular, for every $v \in V_{m} \backslash V_{0},$ we can find exactly two distinct $i, j \in \Z_{3}$ and exactly two distinct words $w$ and $w'$ of length at most $m$ such that $F_{w}(q_{i}) = v = F_{w'}(q_{j}).$ On the other hand, we observe that for every $m$-cell $A$, there is a unique word of length $m$ $w$ such that $A = F_{w}T.$

\begin{figure}
    \centering
    \begin{subfigure}[b]{0.3\textwidth}
        \centering
        \includegraphics[width=\textwidth]{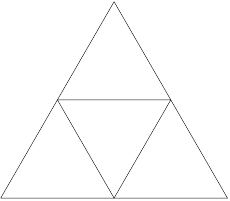}
        
    \end{subfigure}%
    ~ 
    \begin{subfigure}[b]{0.3\textwidth}
        \centering
        \includegraphics[width=\textwidth]{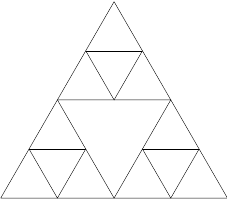}
        
    \end{subfigure}
        \caption{$\beta_{1}$ and $\beta_{2}.$}
\end{figure}

\subsection{Integration and Averaging on $SG$ and $\beta_{m}$}
For the purposes of this paper, any integration on $SG$ is done with respect to the measure that assigns a measure of 1 to $SG$ and $3^{-m}$ to each $m$-cell, so $SG$ with this measure is a probability space.

We define the integral of a continuous function $f: SG \to \R$ in terms of Riemann sums, i.e.

$$\int_{SG} f \, d \mu = \lim_{m \to \infty} \sum_{|w| = m} f(x_{w}) \mu(F_{w}(SG))$$

\noindent where each $x_{w}$ is some point in $F_{w}(SG).$ An argument yields the more computationally effective formula:

$$\int_{SG} f \, d \mu = \lim_{m \to \infty} \frac{1}{3} \sum_{i=0}^{2} f(F_{w}q_{i}) \mu(F_{w}(SG)) \eqno(2.2). $$

\noindent We now define discrete measures $\nu_{m}$ on $\beta_{m}$ by the following rule: 

$$\text{If } v \in V_{0}, \text{ then } \nu_{m}(v) = \frac{1}{3^{m+1}}, \text{ and if }w \in V_{m} \backslash V_{0}, \text{ then } \nu_{m}(w) = \frac{2}{3^{m+1}}.$$

\noindent Using this along with (1), we see that

$$\int_{\beta_{m}} f \, d \nu_{m} = \frac{1}{3} \sum_{i=0}^{2} f(F_{w}q_{i}) \mu(F_{w}(SG)),$$

\noindent so

$$\int_{SG} f \, d \mu = \lim_{m \to \infty}\int_{\beta_{m}} f \, d \nu_{m}.$$

\noindent Integration over a subset $A$ of $SG$ is defined analogously, making use of the characteristic function $\chi_{A}$ when appropriate. We make an important remark: by construction of the Sierpinski gasket, we have for an $m$-cell $K,$ we have, for $\theta = \mu$ or $\nu_{n}$ for $n > m,$

$$\int_{K} f \, d \theta = \sum_{i=0}^{2} \int_{F_{i}K} f \, d \theta. \eqno(2.3)$$

We now define two averages. The discrete average on an $m$-cell $C$ corresponding to the word of length $m,$ $w,$ for a function on either $SG$ or $\beta_{n}$ for $n$ at least $m$ is given by:

$$A_{C}(u) = \frac{1}{3} \sum_{i=0}^{2} u(F_{w} q_{i}) \eqno(2.4)$$

\noindent where $\partial C$ refers to the boundary points of $C$. The continuous average is given by

$$B_{C}(u) = 3^{m} \int_{C} u \, d \mu \eqno(2.5).$$

\subsection{Energy and Distance}
For real-valued functions $u,v$ on $\beta_{m},$ we define the graph energy pairing

$$E_{m}(u,v) = \sum_{x \sim y} (u(x)-u(y))(v(x)-v(y))$$

\noindent where $x \sim y$ means there is an edge between $x$ and $y,$ so the sum ranges over the edges of the $\beta_{m}.$ It's easy to see that $E_{m}(u,u) = 0$ if and only if $u$ is a constant function. We then define

$$\mathcal{E}_{m}(u,v) = \left( \frac{5}{3} \right)^{m} E_{m}(u,v).$$

\noindent Then, for two real valued function $u$ and $v$ on $SG,$ we define the graph energy bilinear pairing

$$\mathcal{E}(u,v) = \lim_{m \to \infty} \mathcal{E}_{m}(u,v)$$

\noindent if the limit exists. Since $\mathcal{E}_{m}(u,u)$ is nondecreasing and nonnegative, this limit always exists in the case $u=v.$

\begin{defn}
We say that a real-valued function $u$ on $SG$ has {\bf{finite energy}} if

$$\mathcal{E}(u,u) < \infty.$$

\noindent The set of such functions is denoted $\dom$.
\end{defn}

We can use this to define a metric on $SG.$

\begin{defn}
The {\bf{resistance metric}} on $SG$ is the function $R: SG \times SG \to \R$ is the function such that 

$$R(x,y) = \inf \{C \in \R: |u(x)-u(y)|^{2} \le C \mathcal{E}(u,u) \, \forall u \in \dom \}. \eqno(2.6)$$

\noindent An important property of $R(x,y)$ is that $c|x-y|^{\beta} \le R(x,y) \le C|x-y|^{\beta}$ for some constants $c$ and $C,$ where $\beta = \log(5/3) / \log(2).$

\end{defn}

We also note the following fact that will be useful in the subsequent section

\begin{thm}
The space $\dom$ modulo the constant functions is a Hilbert space with inner product $\mathcal{E}(u,v).$
\end{thm}
\subsection{The Laplacian}

We define the weak Laplacian on $SG$ by the following:

\begin{defn}
Let $u \in \dom$. Then $\Delta u = f$ for $f$ continuous if

$$\mathcal{E}(u,v) = - \int \limits_{SG} fvd\mu \eqno(2.7)$$

\noindent for all $v \in \dom$. If $\Delta u$ exists, we say that $u \in \domLap.$
\end{defn}

It turns out that the weak Laplacian on $SG$ has a pointwise formula. We define the graph Laplacian of a function $u$ on $\beta_{m}$ at vertex $x \in V_{m} \backslash V_{0}$ to be

$$\Delta_{\beta_{m}}u(x) = \sum_{x \sim y} (u(y)-u(x)). \eqno(2.8)$$

\noindent We define the renormalized graph Laplacian $\Delta_{m} = \frac{3}{2}5^{m} \Delta_{\beta_{m}}.$ We do not define the Laplacian on $q_{0}, q_{1}, q_{2},$ keeping with our defining them as boundary points. It turns out that the $\Delta_{m}$ well-approximates the Laplacian on $SG$ in the followin sense.

\begin{thm}
If $u \in \domLap,$ then we have

$$\Delta u(x) = \lim_{m \to \infty} \Delta_{m}u(x) \eqno(\ast)$$

\noindent uniformly for all $x \in V_{\ast} \backslash V_{0}.$ Conversely, if $u$ is a continuous function on $V_{\ast} \backslash V_{0}$ and the right side of ($\ast$) converges uniformly to a continuous function on $V_{\ast} \backslash V_{0},$ then the extension of $u$ to $SG$ is in $\domLap$ and ($\ast$) holds for all $x \in SG.$
\end{thm}

For a proof, see (\cite{kigami2001analysis, MR2246975}). In the following, we deal with the graph Laplacian $\Delta_{\beta_{m}}.$

\subsection{Spectral Decimation}
A key feature of the $\beta_{m}$ graphs is that every eigenfunction on $\beta_{k}$ naturally continues to an eigenfunction on $\beta_{k+1},$ and hence every eigenfunction on $\beta_{k}$ naturally continues to an eigenfunction on $\beta_{n}$ for $n > k.$ We make this precise:

\begin{defn}
A sequence of graphs $G_{m}, m \in \mathbb{N},$ each with Laplacians $\Delta_{m}$ and vertices $V_{m}$ such that the cardinality of $V_{m}$ is strictly increasing and tends to infinity satisfies the {\bf{spectral decimation property}} if there are onto linear operators $R_{m}$ from the space of real valued functions on $G_{m+1}$ to the space of real valued functions on $G_{m}$ such that

(1) There is a finite number of eigenvalues $\alpha_{1},...\alpha_{n}$ (one of which is 0) independent of $m$ such that the kernel of $R_{m}$ consists of the subspace of real valued functions on $G_{m+1}$ spanned by the eigenfunctions on $G_{m+1}$ with the $\alpha_{i}$ as their eigenvalues.

(2) If $u_{m+1}$ is an eigenfunction on $G_{m+1}$ with eigenvalue $\lambda_{m+1}$ that does not lie in $\ker R_{m},$ then $R_{m}(u_{m+1})$ is a non-zero eigenfunction on $G_{m}$ with eigenvalue $\lambda_{m}$ with finite inverse image.

(3) Keeping the notation of (2), there exists a rational function $f: \mathbb{R} \to \mathbb{R}$ independent of $\lambda_{m+1}$ such that $\lambda_{m} = f(\lambda_{m+1}).$
\end{defn}

\noindent The eigenvalues $\alpha_{1},..., \alpha_{n}$ in Definition 2.8 are known as {\bf{forbidden eigenvalues}}. To show that a sequence of graphs $G_{m}$ satisfies Definition 2.8, we will start with an eigenfunction $f$ on $G_{m}$ and a formula to "continue" $f$ to an eigenfunction $f'$ on $G_{m+1}.$ From this situation, the operators $R_{m}$ will be obvious. Note that in this definition, we do not require that $G_{m}$ is a subgraph of any $G_{n}$ for $m < n,$ so we mean continuation in a rather loose sense.

It turns out that the $\beta_{m}$ satisfy the spectral decimation property. To see this, consider Figure 2. 

\begin{figure}
    \centering
\includegraphics[width =.3\textwidth]{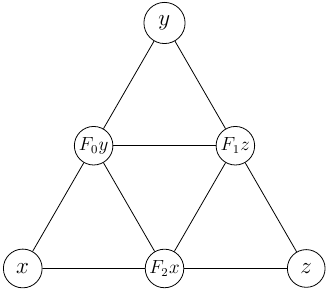}
\caption{A general $(m-1)$-cell of $\beta_{m}.$}
\end{figure}

The $(m-1)$-cell has boundary $x, y, z.$ Suppose that $u$ is an eigenfunction on $\beta_{m-1}$ with eigenvalue $\lambda_{m-1}.$ We would like to continue $u$ to an eigenfunction $u'$ on $\beta_{m}$ with eigenvalue $\lambda_{m}$ such that $u(w) = u'(w)$ for all $w \in V_{m-1}.$ It turns out that if we choose $\lambda_{m}$ such that it satisfies $\lambda_{m-1} = \lambda_{m}(5-\lambda_{m}),$ i.e.

$$\lambda_{m} = \frac{5 + \varepsilon_{m} \sqrt{25-4 \lambda_{m-1}}}{2} \eqno(2.9)$$

\noindent where $\varepsilon_{m} = \pm 1$ and define $u'$ by letting $u' = u$ on $\beta_{m-1}$ and

$$u'(F_{0}y)= \frac{4-\lambda_{m})(u(x)+u(y))+2u(z)}{(2-\lambda_{m})(5-\lambda_{m})} \eqno(2.10)$$

$$u'(F_{1}z) = \frac{4-\lambda_{m})(u(y)+u(z))+2u(x)}{(2-\lambda_{m})(5-\lambda_{m})} \eqno(2.11)$$

$$u'(F_{2}x) = \frac{4-\lambda_{m})(u(x)+u(z))+2u(y)}{(2-\lambda_{m})(5-\lambda_{m})} \eqno(2.12)$$

\noindent then $u'$ is an eigenfunction on $\beta_{m}$ with $\lambda_{m}$ as its eigenvalue. More precisely:

\begin{thm}
Suppose $\lambda_{m} \neq 2, 5,$ or 6, and $\lambda_{m-1}$ is given by

$$\lambda_{m-1} = \lambda_{m}(5-\lambda_{m}).$$

\noindent If $u$ is an eigenfunction on $\beta_{m-1}$ with eigenvalue $\lambda_{m-1}$ that is continued to a function $u'$ on $\beta_{m}$ by (2.10)-(2.12), then $u'$ is an eigenfunction on $\beta_{m}$ with eigenvalue $\lambda_{m}.$ Conversely, if $u'$ is an eigenfunction on $\beta_{m}$ with eigenvalue $\lambda_{m},$ then the restriction of $u'$ to $\beta_{m-1}$ is an eigenfunction on $\beta_{m-1}$ with eigenvalue $\lambda_{m-1}.$
\end{thm}

\noindent In this case, the numbers 2, 5, and 6 are the forbidden eigenvalues. That 2 and 5 are not allowed follows from the formulas (2.10)-(2.12). That 6 is not allowed follows from the proof that $\lambda_{m}$ must satisfy (2.9); see \cite{MR2246975} for the full details.

We note the following: if we have a sequence of eigenvalues $\{ \lambda_{m} \}_{m=1}^{\infty},$ where $\lambda_{n}$ and $\lambda_{n+1}$ are related by (2.9), we only allow a finite number of the $\varepsilon_{m}$ to equal 1.

\subsection{The Neumann Eigenbasis of $\beta_{m}$}
In this paper, we concern ourselves with the Neumann eigenfunctions of $\beta_{m},$ which are eigenfunctions $u$ of $\Delta_{\beta_{m}}$ with eigenvalue $\lambda_{m}$ that satisfy the following conditions:

$$(4-\lambda_{m})u(q_{0}) = 2(u(F_{0}^{m}q_{1})+u(F_{0}^{m}q_{2})) \eqno(2.13)$$
$$(4-\lambda_{m})u(q_{1}) = 2(u(F_{0}^{m}q_{0})+u(F_{0}^{m}q_{2})) \eqno(2.14)$$
$$(4-\lambda_{m})u(q_{2}) = 2(u(F_{0}^{m}q_{0})+u(F_{0}^{m}q_{1})) \eqno(2.15)$$

\noindent It is known that the Neumann eigenfunctions on $\beta_{m}$ form a vector space of dimension $\frac{3^{m+1}+3}{2},$ which is the number of vertices of $\beta_{m}.$

In this section, we list all of the Neumann eigenfunctions on $\beta_{m},$ as we'll need this list for the proof of our sampling theorem in Section 4. For complete details, see Section 3.3 of \cite{MR2246975}.

In Figure 3, we list the Neumann eigenbasis of $\beta_{0}.$ The basis consists of a constant eigenfunction with eigenvalue 0 and two rotations of a nonconstant eigenfunction with eigenvalue 6 (recall that we have not defined the graph Laplacian on $\beta_{0}$ and are instead forcing the Neumann conditions (2.13)-(2.15) on each vertex).

\begin{figure}
		\centering
    \begin{subfigure}[b]{0.3\textwidth}
        \centering
        \includegraphics[scale = .6]{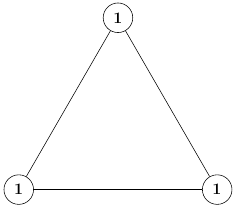}
    \end{subfigure}%
    ~ 
    \begin{subfigure}[b]{0.3\textwidth}
        \centering
        \includegraphics[scale = .6]{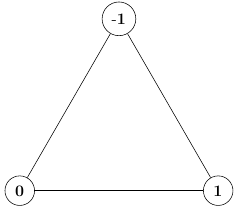}
    \end{subfigure}
    
    \caption{The Neumann eigenbasis of $\beta_{0}$ consists of the constant eigenfunction on the left and two rotations of the eigenfunction on the right. The left eigenfunction has eigenvalue 0, whereas the one on the right has eigenvalue 6.}
\end{figure}

The basis for $\beta_{1}$ consists of all three rotations of the eigenfunction with eigenvalue 6 in Figure 4, as well as the continuation of the eigenfunctions on $\beta_{0}$ above to eigenfunctions on $\beta_{1}$ via spectral decimation. The constant function is continued to the constant function, and the two eigenfunctions on the Neumann eigenbasis for $\beta_{0}$ are continued to eigenfunctions (We make the remark that while equation (2.6) gives two possible eigenvalues for a continuation, Theorem 2.2 asserts that we cannot continue an eigenfunction to the eigenvalues 2, 5, and 6. Thus, in the case of the Neumann eigenbasis of $\beta_{0},$ each function has precisely one continuation).

\begin{figure}
\centering
\includegraphics[width =.3\textwidth]{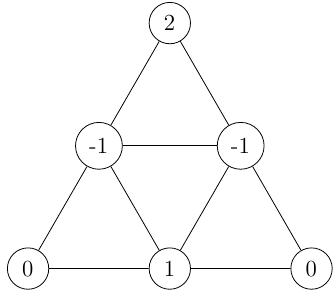}
\caption{An element of the Neumann eigenbasis for $\beta_{1}$ with eigenvalue 6. The other two elements of the eigenbasis with eigenvalue 6 are the rotations of this one.}
\end{figure}

\begin{figure}
\centering
\includegraphics[width =.3\textwidth]{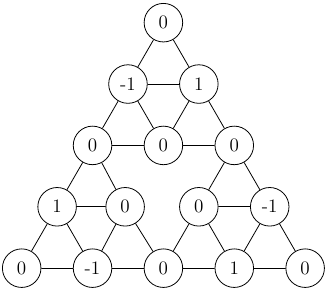}
\caption{The element of the Neumann eigenbasis for $\beta_{2}$ with eigenvalue 5.}
\end{figure}

We get eight linearly independent Neumann eigenfunctions on $\beta_{2}$ by continuing the six on $\beta_{1}$ by spectral decimation. We get one eigenfunction of multiplicity five by alternating placing 1 and -1 around the cycle corresponding to the downward pointing triangle of side length $\frac{1}{2}.$ See Figure 5 for details. We get a total of six eigenfunctions with eigenvalue 6 by considering the three rotations of the two functions in Figure 6.

\begin{figure}
		\centering
    \begin{subfigure}[b]{0.4\textwidth}
        \centering
        \includegraphics[scale = .55]{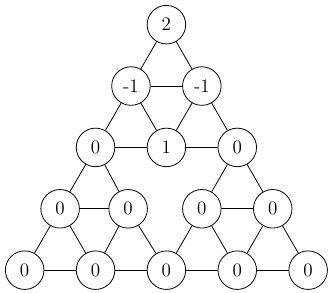}
    \end{subfigure}%
    ~ 
    \begin{subfigure}[b]{0.4\textwidth}
        \centering
        \includegraphics[scale = .55]{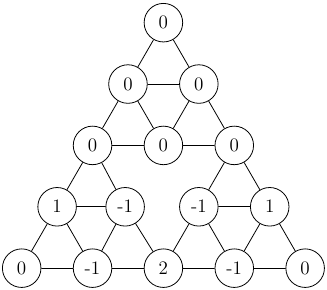}
    \end{subfigure}
    
    \caption{Two elements of the Neumann eigenbasis for $\beta_{2}$ with eigenvalue 6. The remaining elements of the Neumann eigenbasis for $\beta_{2}$ with eigenvalue 6 are the rotations of these two functions.}
\end{figure}

For the general case, we get Neumann eigenfunctions with eigenvalue 5 by alternating placing 1 and -1 around each of the downward pointing triangle 1-cycle of side length greater than $\frac{1}{2^{m}}.$ By induction, there are $\frac{3^{m-1}-1}{2}$ such triangles, hence there are $\frac{3^{m-1}-1}{2}$ such Neumann eigenfunctions with eigenvalue 5. To get Neumann eigenfunctions with eigenvalue 6, we can essentially copy the eigenfunctions listed in Figure 6. Specifically, we can place a 2 at each element of $V_{0}$ and repeat the construction of the eigenfunction in Figure 7(a), making appropriate adjustments, to get three functions with eigenvalue 3. We can then place a 2 at each element of $V_{m-1} \backslash V_{0}$ and then repeat the construction (with appropriate adjustments) of the eigenfunction in Figure 7(b) to get $\frac{3^{m}-3}{2}$ additional Neumann eigenfunctions with eigenvalue 6. Combining these eigenfunctions with the ones obtained from continuation of the Neumann eigenbasis of $\beta_{m-1},$ we obtain the Neumann eigenbasis for $\beta_{m}.$

We now make two remarks. First we say that an element of the Neumann eigenbasis of $\beta_{m}$ is {\bf{generated at level $m$}}, or alternatively has {\bf{generation of birth at level $m$}} if it is not the continuation via spectral decimation of any Neumann eigenbasis element on $\beta_{m-1}.$ Second, we note that it is a fact that the largest eigenvalue on any $\beta_{m}$ is 6, and the second largest is 5. This will come into play in an important definition later.

We now consider Neumann eigenfunctions on $SG$ with the Laplacian $\Delta.$ An eigenfunction $u$ on $SG$ is a Neumann eigenfunction if, for $i \in \Z_{3},$ we have that the normal derivative

$$\partial_{n} u(q_{i}) = \lim_{m \to \infty} \left( \frac{5}{3} \right)^{m} (2u(q_{i}) + u(F_{i}^{m} q_{i+1}) - u(F_{i}^{m} q_{i-1})) = 0.$$

\noindent It turns out that every Neumann eigenfunction $u$ on $SG$ with eigenvalue $\lambda$ can be obtained from applying the spectral decimation formulas (2.10)-(2.12) to some Neumann eigenfunction $u_{m_{0}}$ on $\beta_{m_{0}}.$ Specifically, for every Neumann eigenfunction $u$ on $SG$ with eigenvalue $\lambda,$ there is some Neumann eigenfunction $u_{m_{0}}$ with generation of birth $m_{0}$ and a sequence of extensions $u_{m}$ via spectral decimation with eigenvalues $\lambda_{m}$ such that $u_{m} \to u$ uniformly and

$$\lambda = \lim_{m \to \infty} \frac{3}{2}5^{m} \lambda_{m}. \eqno(2.16)$$

\noindent As with the Laplacian, we ignore the renormalization constant $\frac{3}{2}5^{m}$ for all computations on the $\beta_{m}.$

\subsection{The Neumann Spectrum of $SG$}

We end the overview of analysis on $SG$ by discussing briefly the Neumann spectrum of $SG.$ Specifically, we wish to somehow classify the smallest Neumann eigenvalues. To do this, we recall from the previous subsection that Neumann eigenfunctions on $SG$ are obtained by continuing Neumann eigenfunctions from the $\beta_{m}$ graphs via spectral decimation. We observe that for all $m,$ $\lambda_{m+1} < \lambda_{m}$ if $\varepsilon_{m+1} = -1$ by (2.9). We also see that if $\lambda_{m} = 6,$ $\varepsilon_{m+1}$ can only be 1 as $\varepsilon_{m+1} = -1$ yields $\lambda_{m+1} = 2,$ which is a forbidden eigenvalue. With this information, as well as the discussion in the previous subsection, one can see that we get the smallest $\frac{3^{m+1}+3}{2} - \frac{3^{m}+3}{2} = 3^{m}$ eigenvalues of the Neumann spectrum of $SG$ by taking the Neumann eigenvalues of $\beta_{m}$ that are not 6, continuing them via (2.9) by letting $\varepsilon_{n} = -1$ for all $n > m,$ and using the limit definition (2.13). The continuations of the Neumann eigenfunctions on $\beta_{m}$ corresponding to these eigenvalues on $\beta_{m}$ and their aforementioned continuations to Neumann eigenvalues on $SG$ will be of particular interest in the sampling theorem. We call these functions {\bf{the first $3^{m}$ elements of the Neumann eigenbasis of $SG$}}. One can show via spectral decimation that these first $3^{m}$ Neumann eigenfunctions have eigenvalues bounded by $B5^{m}$ for some $B.$
\section{The Average Cell Graphs $\Gamma_{m}$}

\subsection{Construction of the Graphs}

In this section, we introduce a new family of graphs approximating $SG.$ For $m$ a nonnegative integer, we obtain $\Gamma_{m}$ from $\beta_{m}$ as follows: every vertex of $\Gamma_{m}$ corresponds to an $m$-cell on $\beta_{m},$ and two vertices are connected by an edge if and only if the corresponding $m$-cells share exactly one boundary point (namely, no vertex on $\Gamma_{m}$ is connected to itself). For example, since $\beta_{0}$ is just the a 0-cell, $\Gamma_{0}$ is the graph with one vertex and no edges. Both $\Gamma_{1}$ and $\Gamma_{2},$ as well as the graphs $\beta_{1}$ and $\beta_{2}$ that they are derived from, are listed in Figures 7 and 8. An important thing to note is that while $\beta_{m}$ naturally contained the vertices of $\beta_{m-1}$ for $m \geq 1,$ there is no such preservation for the $\Gamma_{m}$ graphs. In a sense, when moving from $\Gamma_{m}$ to $\Gamma_{m+1},$ every vertex in $\Gamma_{m}$ splits into three distinct vertices of $\Gamma_{m+1}.$ An example of this splitting can be seen by looking at $\Gamma_{1}$ and $\Gamma_{2}$ in Figures 8 and 9. We define the boundary points of $\Gamma_{m}$ to be the vertices with degree 2 (all other vertices have degree 3); note that the boundary points are not preserved from $\Gamma_{m}$ to $\Gamma_{m+1}$ since no vertex is preserved. 

\begin{figure}
		\centering
    \begin{subfigure}[b]{0.3\textwidth}
        \centering
        \includegraphics[width=\textwidth]{Beta1.png}
    \end{subfigure}%
    ~ 
    \begin{subfigure}[b]{0.3\textwidth}
        \centering
        \includegraphics[width=\textwidth]{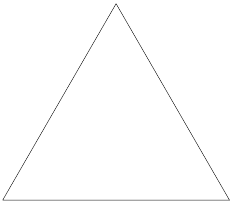}
    \end{subfigure}
    
    \caption{The graphs $\beta_{1}$ and $\Gamma_{1}.$}
\end{figure}

\begin{figure}
		\centering
    \begin{subfigure}[b]{0.3\textwidth}
        \centering
        \includegraphics[width=\textwidth]{Beta2.png}
    \end{subfigure}%
    ~ 
    \begin{subfigure}[b]{0.3\textwidth}
        \centering
        \includegraphics[width=\textwidth]{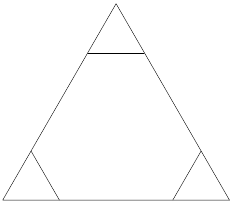}
    \end{subfigure}
    
    \caption{The graphs $\beta_{2}$ and $\Gamma_{2}.$}
\end{figure}

We also recall that since $m$-cells are uniquely indexed by words of length $m,$ so are the vertices of $\Gamma_{m}.$

Given any function $f$ on $\Gamma_{m},$ we want its value on a vertex $x$ to equal the average value of a related function $g$ on $\beta_{m}$ on the corresponding $m$-cell, either in the sense of (2.4) or (2.5) (we will use both). If $u$ is a function on $\Gamma_{m}$ and $x$ is a vertex of $\Gamma_{m}$ corresponding to the $m$-cell on $\beta_{m}$ with vertices $a_{x}, b_{x},$ and $c_{x},$ then there is some function $v$ on $\beta_{m}$ such that for every vertex $x$ in $\Gamma_{m},$ we have

$$u(x) = \frac{v(a_{x})+v(b_{x})+v(c_{x})}{3}. \eqno(3.1)$$ 

\noindent Since we want functions on $\Gamma_{m}$ to correspond to average values of functions on $m$-cells (see (2.4)-(2.5)), we would like for these functions to satisfy property (2.3). We define continuation of a function on $\Gamma_{m}$ to one on $\Gamma_{m+1}$ in the following sense. Assume that a vertex $x$ on $\Gamma_{m}$ splits into three distinct vertices $w, y, $and $z$ on $\Gamma_{m+1}.$ Then we say that a function $g$ on $\Gamma_{m+1}$ is a {\bf{continuation of a function $f$ on $\Gamma_{m}$}} provided that 

$$f(x) = \frac{g(w)+g(y)+g(z)}{3} \eqno (3.2)$$.

We can define integration over $\Gamma_{m}$ by assigning the measure $\frac{1}{3^{m}}$ to each vertex, analogous to our assigning the measure of $\frac{1}{3^{m}}$ to each $m$-cell in $SG.$ Let $\rho_{m}$ denote this measure on $\Gamma_{m}.$ If $W_{m}$ is the set of vertices on $\Gamma_{m},$ we then have

$$\int_{\Gamma_{m}} f \, d \rho_{m} = \frac{1}{3^{m}}\sum_{x \in W_{m}} f(x) \eqno(3.3)$$

\noindent for any function $f$ on $\Gamma_{m}.$ If $f(x)$ is the function corresponding to the average values of some function $g$ on $\beta_{m},$ then (3.1)-(3.3) and a little bit of algebra gives

$$\int_{\Gamma_{m}} f \, d \rho_{m} = \int_{\beta_{m}} g \, d \nu_{m}.$$

We can define the Laplacian on $\Gamma_{m}$ in the same manner that we define it on $\beta_{m}.$ However, there is one caveat: we define the Laplacian at {\emph{every}} vertex in $\Gamma_{m}.$ This keeps with the fact that there really is no good definition of a boundary vertex for $\Gamma_{m}$ in contrast to the $\beta_{m}$ case. Specifically, for $x$ a vertex in $\Gamma_{m},$ we define the graph Laplacian

$$\Delta_{\Gamma_{m}}u(x) = \sum_{x \sim y} (u(y)-u(x)) \eqno(3.4) $$

\noindent and we defined the renormalized graph Laplacian $\Delta_{m}=\frac{3}{2}5^{m} \Delta_{\Gamma_{m}}.$ Note that we also use the symbol $\Delta_{m}$ for the renormalzied graph Laplacian on $\beta_{m}.$ It will be clear from the context which we will use. 

It turns out that we have an analog of Theorem 2.6 in the case of $\Delta_{\Gamma_{m}}.$ Precisely, letting $\Delta_{m}$ denote the renormalized graph Laplacian for $\Gamma_{m},$ we have

\begin{thm}
Let $\Delta u = f.$ Then $\Delta_{m} u$ converges uniformly to $f$. Conversely, if $u$ is integrable, and $\Delta_{m}u$ converges uniformly to a continuous function $f$, then $\Delta u$ exists and equals $f$.
\end{thm}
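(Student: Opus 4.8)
The plan is to route both implications through the weak characterization (2.5): $\Delta u = f$ holds precisely when $\mathcal{E}(u,v) = -\int_{SG} fv\,d\mu$ for every $v \in \dom$. Two features make the cell graph convenient here. First, $\Gamma_m$ has no distinguished boundary, so the discrete Gauss--Green (summation by parts) identity carries no boundary term: writing $\bar{u}$ for the cell-average samples, so that $\bar{u}_C = \frac{1}{3}\sum_i u(p_i^C)$ over the three corners $p_i^C$ of the cell $C$, and introducing the cell energy $\mathcal{E}_m^{\Gamma}(g,h) = \frac{3}{2}(5/3)^m \sum_{C \sim C'} (g(C)-g(C'))(h(C)-h(C'))$ with the cell measure $\mu(C) = 3^{-m}$, one has $\mathcal{E}_m^{\Gamma}(\bar{u},v) = -\sum_C \Delta_m \bar{u}(C)\,v(C)\,\mu(C)$ for all $v$ on $\Gamma_m$. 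The absence of boundary terms is exactly what matches the Neumann form (2.5), and since $\sum_C g(C)v(C)\mu(C)$ is a Riemann-type approximation to an integral against $d\mu$, both sides are set up to pass to the continuum limit.

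For the forward direction I would first replace corner averages by $\mu$-averages. Setting $\langle u\rangle_C = 3^m \int_C u\,d\mu$ and using that on each cell $u = h_C - G_C f$, where $h_C$ is the harmonic function agreeing with $u$ at the corners and $G_C$ is the Dirichlet Green operator of $C$, one finds $\bar{u}_C - \langle u\rangle_C = \langle G_C f\rangle_C$; here I use that a harmonic function has $\mu$-mean equal to the average of its boundary values, so the harmonic parts and the (boundary-vanishing) corrections separately match up. The scaling $\Delta(u \circ F_w) = 5^{-|w|}(\Delta u)\circ F_w$ gives $\langle G_C f\rangle_C = 5^{-m}\langle G_T(f\circ F_w)\rangle_T = 5^{-m} c_0 f(x_C) + 5^{-m}o(1)$, with $c_0 = \langle G_T \mathbf{1}\rangle_T$ universal, $x_C \in C$, and the $o(1)$ controlled by the modulus of continuity of $f$. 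Since this leading term is, to leading order, the same on adjacent cells, the neighbor differences are $o(5^{-m})$, and the factor $\frac{3}{2}5^m$ in $\Delta_m \bar{u}(C) = \frac{3}{2}5^m \sum_{C' \sim C}(\bar{u}_{C'} - \bar{u}_C)$ leaves $\Delta_m \bar{u}(C) = \frac{3}{2}5^m \sum_{C' \sim C}(\langle u\rangle_{C'} - \langle u\rangle_C) + o(1)$ uniformly in $C$. It then remains to show the $\mu$-average cell Laplacian converges uniformly to $f$; this I would establish from the cellwise identity $\sum_{p \in \partial C}\partial_n u(p) = \int_C f\,d\mu$ together with $\mathcal{E}_m^{\Gamma}(\langle u\rangle,\cdot) \to \mathcal{E}(u,\cdot)$, which should exhibit the left-hand side as, up to vanishing error, an average of $f$ over the cells meeting $C$, and hence force uniform convergence to $f$ by continuity.

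For the converse I would run the same identity backwards. Uniform convergence of $\Delta_m \bar{u}$ to a continuous $f$ gives a uniform bound on the cell energies $\mathcal{E}_m^{\Gamma}(\bar{u},\bar{u})$, and lower semicontinuity then places $u$ in $\dom$; integrability of $u$ guarantees that $\sum_C \bar{u}_C\,v(C)\,\mu(C) \to \int_{SG} uv\,d\mu$ and $\sum_C \Delta_m \bar{u}(C)\,v(C)\,\mu(C) \to \int_{SG} fv\,d\mu$ for suitable test data. Passing to the limit in $\mathcal{E}_m^{\Gamma}(\bar{u},v) = -\sum_C \Delta_m \bar{u}(C)\,v(C)\,\mu(C)$ and invoking the form convergence $\mathcal{E}_m^{\Gamma}(\bar{u},v) \to \mathcal{E}(u,\hat{v})$ yields $\mathcal{E}(u,\hat{v}) = -\int_{SG} f\hat{v}\,d\mu$; since the admissible limits $\hat{v}$ range over all of $\dom$, the weak characterization (2.5) gives $\Delta u = f$.

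The main obstacle is the convergence of the cell energy forms $\mathcal{E}_m^{\Gamma} \to \mathcal{E}$ and the uniformity in the cellwise estimates. Unlike the point graphs $\beta_m$, the cell graphs are non-nested: as noted above, no vertex or edge of $\Gamma_m$ survives into $\Gamma_{m+1}$, so the monotone telescoping comparison of energies available for $\beta_m$ is unavailable, and the form convergence must instead be proved by direct asymptotic estimates on pairs of adjacent cells. The second delicate point is controlling the corner-versus-mean discrepancy $\langle G_C f\rangle_C$ uniformly in $C$ after multiplication by $5^m$; this is precisely where the uniform continuity of $f$ (and, in the converse, the integrability of $u$) enters in an essential way.
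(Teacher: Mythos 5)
The paper itself does not prove this theorem; it defers to the method-of-averages paper it cites (\cite{MR1867899}), so your outline must stand on its own. Parts of it do: the discrete Gauss--Green identity on $\Gamma_{m}$ with your normalizations is correct, and your reduction of corner averages to $\mu$-averages via $u = h_{C} - G_{C}f$, the mean-value property of harmonic functions, and the $5^{-m}$ scaling of the Green operator is sound and genuinely useful. But the crux of the forward direction --- \emph{uniform} convergence of $\Delta_{m}$ applied to the averages --- is not established. Your proposed route through $\mathcal{E}_{m}^{\Gamma}(\langle u\rangle,\cdot) \to \mathcal{E}(u,\cdot)$ can only yield convergence of $\Delta_{m}\langle u\rangle$ paired against test functions, i.e.\ weak convergence; no amount of testing upgrades this to uniform pointwise convergence without a local quantitative estimate, and that estimate is precisely what you defer (``should exhibit the left-hand side as, up to vanishing error, an average of $f$\dots''), together with the form convergence that you yourself flag as the main obstacle. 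A proof sketch whose central analytic step is listed among its own open obstacles has a genuine gap, not merely a routine verification left to the reader.

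The gap can be closed by an exact identity you did not find, which also explains why the theorem reduces to the known $\beta_{m}$ theory. Let $C$ be a cell whose corners $p_{1},p_{2},p_{3}$ are interior (degree-4) vertices of $\beta_{m}$, let $C_{i}'$ be the neighbor of $C$ across $p_{i}$ with third and fourth corners $a_{i},b_{i}$, and write $\bar{u}_{C}=\frac{1}{3}\sum_{i}u(p_{i})$. Substituting $u(a_{i})+u(b_{i}) = \frac{2}{3}5^{-m}\Delta_{m}^{\beta}u(p_{i}) + 4u(p_{i}) - u(p_{j}) - u(p_{k})$ into the sum $\sum_{i}(\bar{u}_{C_{i}'}-\bar{u}_{C})$ makes all the harmonic-part terms cancel exactly, leaving
$$\Delta_{m}\bar{u}(C) = \frac{1}{3}\sum_{p \in \partial C}\Delta_{m}^{\beta}u(p),$$
with the analogous Neumann variant at boundary cells. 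Thus the cell-graph Laplacian of the corner averages is \emph{exactly} the corner average of the pointwise graph Laplacian, and uniform convergence on $\Gamma_{m}$ follows at once from the classical uniform convergence of the renormalized $\beta_{m}$ Laplacians (with the Neumann condition handling boundary vertices); your Green-correction argument then transfers the statement from corner averages to $\mu$-averages. Two further defects in the converse: corner averages $\frac{1}{3}\sum_{i}u(p_{i}^{C})$ are not even defined for merely integrable $u$, so the entire converse must be phrased in cell means $3^{m}\int_{C}u\,d\mu$; and ``lower semicontinuity places $u$ in the energy domain'' presupposes a comparison between the non-nested cell energies and $\mathcal{E}$ that is, once again, the unproven form convergence. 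The identity above is the tool that lets the converse, too, be run through the established $\beta_{m}$ results rather than through a form-convergence argument that was never supplied.
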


\noindent For a proof of this, see \cite{MR1867899}. In the following, we work with the graph Laplacian $\Delta_{\Gamma_{m}}.$

\subsection{Spectral Decimation on $\Gamma_{m}$}

It turns out that the $\Gamma_{m}$ graphs satisfy the spectral decimation property listed in Defintiion 2.7. 

\begin{thm}[Spectral Decimation]
Let $u$ be an eigenfunction on $\Gamma_{m}$ with eigenvalue $\lambda_{m}$. Then, $u$ can be continued to at most two eigenfunctions on $\Gamma_{m+1}$ with eigenvalues $\lambda^{(1)}_{m+1}$ and $\lambda^{(2)}_{m+1}$. Furthermore, for each $\lambda^{(k)}_{m+1}$, the corresponding continuation is unique, and

$$u(x) = \frac{1}{3} \sum_{i=0}^{2} u'(F_{i}x) \eqno(\ast \ast)$$

\noindent holds

Conversely, if $u'$ is an eigenfunction on $\Gamma_{m+1}$ with eigenvalue $\lambda^{(1)}_{m+1}$ or $\lambda^{(2)}_{m+1}$, then $u$, the function on $\Gamma_{m}$ defined by $( \ast \ast)$ is an eigenfunction on $\Gamma_{m}$ with eigenvalue $\lambda_{m}$. The relationship between $\lambda_{m}$ and $\lambda^{(k)}_{m+1}$ is given by

$$\lambda^{(k)}_{m+1} = \frac{5 + \varepsilon_{m+1} \sqrt{25-4 \lambda_{m}}}{2}$$

\noindent where $\varepsilon_{m+1} = \pm 1.$
\end{thm}

Before beginning the proof, we remark that these continuations are not pure extensions since no vertices are preserved when moving from $\Gamma_{m}$ to $\Gamma_{m+1}.$

\begin{proof}
The case $m=0$ is trivial. Consider Figure 9. The picture on the left details a general subgraph of the interior of $\Gamma_{m}$ for $m>1$ (The case for $m = 1$ can be obtained by simply deleting the point labeled W and its corresponding edge connecting it to the triangular group of vertices). By symmetry we can assume that this is indeed the general subgraph. We continue it to a corresponding general subgraph of $\Gamma_{m+1}$ in the picture adjacent to it. Since $u$ is an eigenfunction on $\Gamma_{m},$ we have

$$(3-\lambda_{m})u(X) = u(W) + u(Y) + u(Z).\eqno(3.5)$$

Now, assume that $u$ continues to an eigenfunction $u'$ on $\Gamma_{m+1}$. This means that, for vertex $F_{1}X$,

$$(3-\lambda_{m+1})u'(F_{1}X) = u'(F_{0}X) + u'(F_{2}X)+u'(F_{0}Y)\eqno(3.6)$$

\noindent and similarly for every vertex except possibly for $v_{5}$, $v_{9}$, and $v_{a}$ ($v_{1}$ in the case $m=1$), as one of them might be a boundary vertex. By adding $u'(F_{1}X)$ to both sides of (3.6) and using the mean value property (3.2) or ($\ast \ast)$ of the cell graph, we obtain

$$(4-\lambda_{m+1})u'(F_{1}X) = u'(F_{0}Y) + 3u(X) \eqno(3.7).$$

\noindent By a similar argument, we get

$$(4-\lambda_{m+1})u'(F_{0}Y) = u'(F_{1}X) + 3u(Y) \eqno(3.8).$$

\noindent Solving (3.8) for $u'(F_{0}Y)$ and substituting the resulting expression in (3.7), we can then solve for $u'(F_{1}X)$ to obtain

$$u'(F_{1}X) = \frac{3(4-\lambda_{m+1})u(X)+3u(W)}{(3-\lambda_{m+1})(5-\lambda_{m+1})}\eqno(3.8)$$

\begin{figure}
		\centering
    \begin{subfigure}[b]{0.3\textwidth}
        \centering
        \includegraphics[scale = .4]{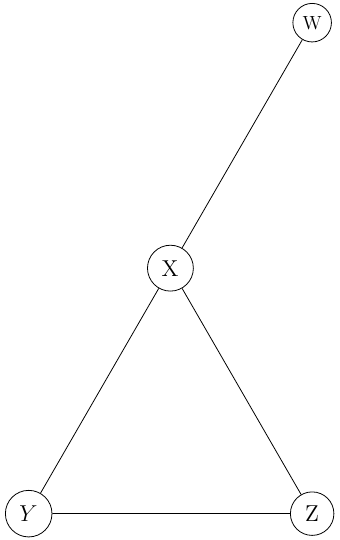}
    \end{subfigure}%
    ~ 
    \begin{subfigure}[b]{0.3\textwidth}
        \centering
        \includegraphics[scale = .4]{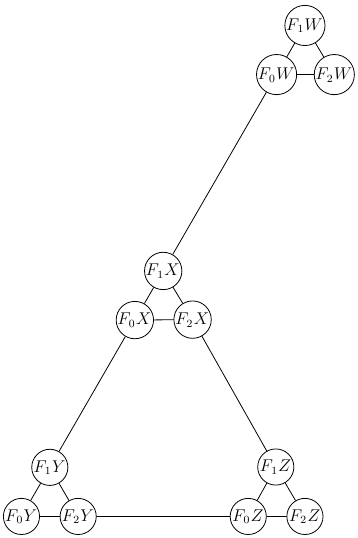}
    \end{subfigure}
    
    \caption{On the left, a general subgraph of $\Gamma_{m}$ centered for an interior vertex $X$. On the right, that subgraph extended down to $\Gamma_{m+1}$.}
\end{figure}

\noindent and similarly for all of the other vertices of degree 3. In other words, the value of the extended eigenfunction on a vertex is a function of the vertex's parent, the (different) parent of the nearest neighboring vertex, and the eigenvalue of the extended function. The $(4-\lambda_{m+1})$ term acts as a weighting factor, which makes sense as the value of the function at the parent cell should affect the value more.

To continue eigenfunctions to vertices of degree 2, we see in the above figure that if $Y$ is a vertex of degree 2 on $\Gamma_{m}$, then $F_{1}Y$ is a vertex of degree 2 on $\Gamma_{m+1}$. To extend $u$ to an eigenfunction on $\Gamma_{m+1}$, we first observe that such an eigenfunction would satisfy

$$(2-\lambda_{m+1})u'(F_{1}Y) = u'(F_{0}Y)+u'(F_{2}Y).\eqno(3.9)$$

\noindent We then add $u'(F_{1}Y)$ to both sides and apply (3.9) and solve for $u'(F_{1}Y)$ to get

$$u'(F_{1}Y) = \frac{3u(Y)}{3-\lambda_{m+1}}.\eqno(3.10)$$

\noindent This is consistent with (3.8) as every vertex adjacent to $v_{5}$ shares its parent cell, so the only factors that should matter are the current eigenvalue and the value of the function on the parent cell.

All that remains is to figure out what the $\lambda^{(k)}_{m+1}$ are. Assume $m \geq 2.$ Adding together the Laplacian relations for $v_{1}, v_{2},$ and $v_{3}$ and applying (2.2) gives

$$(1-\lambda_{m+1})3u(X) = u(v_{4}) + u(v_{7}) + u(v_{b}). \eqno(3.11)$$

\noindent Doing the same for $v_{4}, v_{7},$ and $v_{b}$, and then applying (3.6) gives

$$(2-4\lambda_{m+1} + \lambda_{m+1}^{2})3u(X) = u(v_{5}) + u(v_{6}) + u(v_{8}) + u(v_{9}) + u(v_{a}) + u(v_{c}). \eqno(3.12)$$

\noindent Adding together (3.6) and (3.7) and applying (3.1) gives

$$(3-\lambda_{m})3u(X) = (3-5\lambda_{m} + \lambda_{m}^2)3u(X) \eqno(3.13)$$

\noindent which simplifies to

$$\lambda_{m} = \lambda_{m+1}(5-\lambda_{m+1}), \eqno(3.14)$$

\noindent which has solutions

$$\lambda^{(k)}_{m+1} = \frac{5 + \varepsilon_{m+1} \sqrt{25-4\lambda_{m}}}{2} \eqno(3.15)$$

\noindent for $k = 1, 2$ where $\varepsilon = \pm 1.$ We still have the case where $m=1$ remaining. It is easy to see that the Neumann eigenbasis of $\beta_{0}$ as discussed in section 2.6 is the same as the eigenbasis of $\Gamma_{1}$ (look at Figure 2 to see the functions). One can easily verify that (3.15), (3.8), (3.10) and the analogs of the latter two formulas define continuations of eigenfunctions on $\Gamma_{1}$ to eigenfunctions on $\Gamma_{2}$ for the three basis functions, hence the formulas work for every eigenfunction on $\Gamma_{1}.$

\end{proof}
We see that (3.8), (3.10), and their analogs imply that any given continuation is unique for a given eigenvalue, and (3.15) says that there exist at most two eigenvalues. Similar to the case of the forbidden eigenvalues on $\beta_{m},$ if one of the eigenvalues given by (3.15) happens to be 3 or 5, (3.8), (3.10), and their analogs are no longer valid, so we cannot extend to eigenfunctions in these cases. Note also that (2.9) and (3.15) are the same. This will prove to be vital in the proof of the sampling theorem

\subsection{Eigenbasis of $\Gamma_{m}$}
With spectral decimation in hand, we now have all of the tools that we need in order to produce the basis. Our construction is recursive.

The eigenbasis of $\Gamma_{0}$ is obvious. As mentioned in the proof of Theorem 3.2, the eigenbasis of $\Gamma_{1}$ turns out to be the same as the Neumann eigenbasis for $\beta_{0}$ listed in Section 2. We reprint this eigenbasis in Figure 10 for convenience.

\begin{figure}
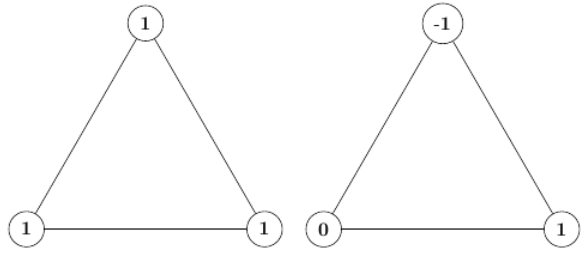

		\centering
    \begin{subfigure}[b]{0.3\textwidth}
        \centering
        \includegraphics[scale = .6]{Gamma1BasConstant.png}
    \end{subfigure}%
    ~ 
    \begin{subfigure}[b]{0.3\textwidth}
        \centering
        \includegraphics[scale = .6]{Gamma1BasNonconstant.png}
    \end{subfigure}
    
    \caption{The basis of $\Gamma_{1}$ consists of the constant eigenfunction on the left and two rotations of the eigenfunction on the right. The left eigenfunction has eigenvalue 0, whereas the one on the right has eigenvalue 3.}
\end{figure}

Now, consider $\Gamma_{2}$. We can create 5 basis elements by using the spectral decimation equations derived above to extend the three elements of the basis of $\Gamma_{1}$ down to $\Gamma_{2}$. Note that (3.15) implies that the constant eigenfunction can only bifurcate into eigenfunctions with eigenvalues 0 and 5. However, 5 is a  forbidden eigenvalue, so the constant eigenfunction can only extend to the constant eigenfunction. The remaining elements of the basis are computed by inspection, and are listed below. The first type is a "battery chain" construction around the hexagon in the graph. Start at a point that lies on the hexagon and place the value $-1$. Then go around the hexagon clockwise, alternating between placing the values $1$ and $-1$ on vertices until every point on the hexagon is nonzero. The second type is constructed by placing a $2$ at one boundary vertex, $-1$ at its adjacent vertices, $-1$ at the adjacent vertices of the boundary point's adjacent vertices, and $1$ at the two remaining non-boundary vertices. The remaining elements of the basis consist of one function of the first type, and the three rotations of the second type. Refer to Figure 11 for details.

\begin{figure}
		\centering
    \begin{subfigure}[b]{0.3\textwidth}
        \centering
        \includegraphics[scale = .6]{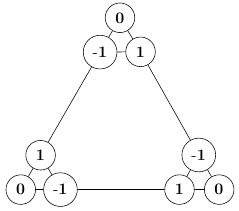}
    \end{subfigure}%
    ~ 
    \begin{subfigure}[b]{0.3\textwidth}
        \centering
        \includegraphics[scale = .6]{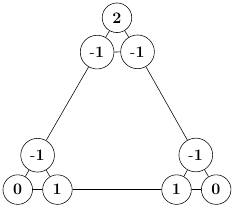}
    \end{subfigure}
    
    \caption{The two types of basis elements for $\Gamma_{2}$ that are not continued from $\Gamma_{1}$. The left has eigenvalue 5 whereas the right has eigenvalue 3.}
\end{figure}

In general, consider going from $\Gamma_{m-1}$ to $\Gamma_{m}$. Using spectral decimation, we take the eigenbasis on $\Gamma_{m-1}$ and extend it to a linear independent set in the space of functions on $\Gamma_{m}$ with cardinality $2\cdot3^{m-1}-1$ on $\Gamma_{m}$, where every eigenfunction in the basis on $\Gamma_{m-1}$ extends to two eigenfunctions on $\Gamma_{m}$ except for the constant function, which only extends to the constant function.

The case of eigenfunctions with eigenvalue 5 on $\Gamma_{m}$ is similar to the case of eigenfunctions with eigenvalue 5 on $\beta_{m}.$ For $\beta_{m},$ we placed consecutive pairs of 1 and -1 around each downward pointing triangle 1-cycle to get an eigenfunction with eigenvalue 5. It is not hard to see by the definition of $\Gamma_{m}$ that for every downward pointing triangle 1-cycle in $\beta_{m},$ there is a hexagon 1-cycle on $\Gamma_{m}$ (look at $\beta_{2}$ and $\Gamma_{2}$ to make sense of this. The converse also holds. Given the one to one correspondence of downward pointing triangle 1-cycles on $\beta_{m}$ with hexagon 1-cycles on $\Gamma_{m},$ we see that there are $1 + 3 + 3^{2} +...+3^{m-2} = \frac{3^{m-1}-1}{2}$ hexagon 1-cycles on $\Gamma_{m}$. We proceed analogously to our construction of the eigenfunction with eigenvalue 5 of $\Gamma_{2}$: pick a hexagon and a vertex on it and assign it a value of $-1$, then continue clockwise around the hexagon, alternating between $1$ and $-1$ until every vertex on the hexagon has a nonzero value, and assign 0 to the rest of the vertices. Do this for every hexagon on $\Gamma_{m}$ to get $\frac{3^{m-1}-1}{2}$ eigenfunctions with eigenvalue 5.

We now consider the eigenfunctions with eigenvalue 3. This case is similar to that of the eigenfunctions with eigenvalue 6 on $\beta_{m}.$ First, we take each vertex on the boundary of $\Gamma_{m}$ and copy the appropriate rotation of the second type of non-extended eigenfunction on $\Gamma_{2}$ by assigning 2 to a boundary point, then copying the remaining portion of the eigenfunctions with eigenvalue 3 on $\Gamma_{2}.$. This yields three eigenfunctions. To get the remaining eigenfunctions, we first note that a routine induction argument shows that, for $m \geq 2,$ there are $3^{m-2}$ copies of $\Gamma_{2}$ in $\Gamma_{m}$. Take two adjacent copies of $\Gamma_{2}$ in $\Gamma_{m},$ and consider the edge connecting them. Assign a value of $2$ to each of the vertices on the edge, and then repeat the construction of the second type of nonextended eigenfunction on $\Gamma_{2}$ twice. Refer to Figure 12 below for the specific construction. An inductive argument shows that there are $\frac{3^{m-1}-3}{2}$ eigenfunctions of this type, so we have a total of $\frac{3^{m-1}+3}{2}$ eigenfunctions at level $m$ with eigenvalue 3.

\begin{figure}
\begin{center}
\includegraphics[scale=.7]{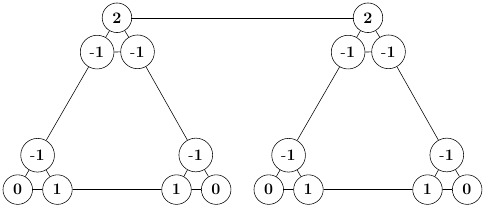}
\end{center}
\caption{The type of eigenfunctions with eigenvalue 3 on level $m$ that have no support on the boundary}
\end{figure}

Counting the eigenfunctions that we have thus far, we see we have

$$2\cdot3^{m-1}-1 + \frac{3^{m-1}-1}{2} + \frac{3^{m-1}+3}{2} = 3^{m}.$$

\noindent However, we can have at most $3^{m}$ linearly independent basis elements, so we have necessarily constructed the eigenbasis.

\section{The Sampling Theorem}

Recall the construction of the first $3^{m}$ elements of the Neumann eigenbasis of $SG$ as mentioned in Section 2.7. We can now precisely define what we mean by a bandlimited function on $SG.$

\begin{defn}
A \emph{level $m$ bandlimited function} on $SG$ is a function that is a linear combination of the first $3^{m}$ elements of the Neumann eigenbasis of $SG$ (see Section 2.7).
\end{defn}

\noindent By the discussion in Section 2.7, we know that every level $m$ bandlimited function is the linear combinations of eigenfunctions with eigenvalue at most $B 5^m.$ Corresponding to the discrete and continuous averages $A_{C}$ and $B_{C}$ defined in Section 2.2, we define maps $A_{m}$ and $B_{m}$ from the space of real valued functions $u$ on $SG$ to functions on $\Gamma_{m}$ via the formulas

$$A_{m}(u)(x) = A_{C}(u) \eqno(4.1)$$

$$B_{m}(u)(x) = B_{C}(u) \eqno(4.2)$$

\noindent where $x$ is the vertex in $\Gamma_{m}$ corresponding to the $m$-cell $C.$ Note that $A_{m}$ is also valid as a linear map from functions on $\beta_{m}$ to functions on $\Gamma_{m}.$

We are now ready to state the sampling theorem mentioned in Section 1.

\begin{thm}
On $SG$, a level $m$ bandlimited function $u$ is uniquely determined by $A_{m}(u)$ and $B_{m}(u).$
\end{thm}

Right away, we see the need for restricting to the first $3^{m}$ Neumann eigenfunctions on $SG$ in our definition of level $m$ bandlimited functions. If $u$ is an eigenfunction on $SG$ with $l(u) = m$ and $E(u) = 6,$ then $A_{m}(u)$ is the zero function (see the discussion in Section 2 for the construction of the eigenfunctions with eigenvalue 6). Thus any information from eigenfunctions with eigenvalue 6 is lost in the averaging.

In order to prove the theorem, we need a technical lemma.

\begin{lem}
Let $m \geq 1,$ let $u$ be an eigenfunction on $\beta_{m-1}$ with eigenvalue $\lambda_{m-1}$ , and let $u'$ be its continuation by spectral decimation to an eigenfunction on $\beta_{m}$ with eigenvalue $\lambda_{m}$. Then $A_{m}(u')$ is an eigenfunction on $\Gamma_{m}$ with eigenvalue $\lambda_{m}$.
\end{lem}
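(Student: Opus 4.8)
The plan is to prove the lemma by verifying directly that $A(u')$ satisfies the $\Gamma_m$ eigenvalue equation with eigenvalue $\lambda_m$ at \emph{every} cell; equivalently, I would show that the cell-averaging map $A$ intertwines the two Neumann Laplacians, $A\,\Delta_{\beta_m} = \Delta_{\Gamma_m}\,A$. Since $u'$ is a $\beta_m$ eigenfunction with eigenvalue $\lambda_m$, the intertwining relation immediately gives that $A(u')$ is a $\Gamma_m$ eigenfunction with the same eigenvalue. (The hypothesis that $u'$ is a spectral-decimation extension is really needed only to ensure that $\lambda_m$ is a genuine, non-forbidden $\Gamma_m$-eigenvalue and that $A(u')$ does not vanish; the intertwining itself will hold for any Neumann eigenfunction on $\beta_m$.) First I would fix the correspondence between the graphs: each vertex of $\Gamma_m$ is an upward $m$-cell $C$ whose boundary $\partial C$ consists of three $\beta_m$-vertices $a,b,c$, two cells are $\Gamma_m$-adjacent exactly when they share one such corner, and $A_C(u') = \tfrac13\bigl(u'(a)+u'(b)+u'(c)\bigr)$. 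I would also record the eigenvalue equations to be combined: on $\beta_m$ an interior (degree $4$) vertex $p$ satisfies $(4-\lambda_m)u'(p) = \sum_{q\sim p}u'(q)$, while a boundary vertex $q_i$ satisfies the reflected Neumann relation $(4-\lambda_m)u'(q_i) = 2\sum_{q\sim q_i}u'(q)$; on $\Gamma_m$ an interior (degree $3$) cell satisfies $(3-\lambda_m)A_C = \sum A_{C'}$ and a boundary (degree $2$) cell satisfies $(2-\lambda_m)A_C = \sum A_{C'}$, just as in the proof of Theorem 3.1.

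For an interior cell $C$ with corners $a,b,c$, each corner $a$ is shared with a unique neighboring cell $C_a$ whose remaining corners I call $a_2,a_3$, so that the four $\beta_m$-neighbors of $a$ are precisely $b,c,a_2,a_3$. Writing $S = u'(a)+u'(b)+u'(c) = 3A_C$ and letting $T$ denote the sum of the six remaining corners of $C_a,C_b,C_c$, I would sum the three interior equations at $a,b,c$: the values $u'(a),u'(b),u'(c)$ each occur twice, so the sum collapses to $(2-\lambda_m)S = T$. Since also $3\bigl(A_{C_a}+A_{C_b}+A_{C_c}\bigr) = S + T$, substituting gives $A_{C_a}+A_{C_b}+A_{C_c} = \tfrac{S+(2-\lambda_m)S}{3} = (3-\lambda_m)A_C$, which is exactly the interior $\Gamma_m$ equation. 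This part is a clean symmetric manipulation.

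The main obstacle is the boundary case, and it is where the reflected Neumann condition does the essential work. For a corner cell $C$ with one boundary corner $q_i$ and two interior corners $b,c$ (shared with $C_b,C_c$), I would first use the interior equations at $b$ and $c$ to express the outer corner sums, obtaining that the outer corners of $C_b,C_c$ sum to $(3-\lambda_m)\bigl(u'(b)+u'(c)\bigr) - 2u'(q_i)$, and then substitute the reflected relation $u'(b)+u'(c) = \tfrac12(4-\lambda_m)u'(q_i)$. A short computation reduces the desired identity $(2-\lambda_m)A_C = A_{C_b}+A_{C_c}$ to the algebraic identity $(2-\lambda_m)(6-\lambda_m) = (4-\lambda_m)^2 - 4$, both sides equal to $\lambda_m^2 - 8\lambda_m + 12$. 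The factor $2$ coming from the reflection is precisely what compensates for the degree drop from $4$ at the $\beta_m$ boundary vertex to $2$ at the $\Gamma_m$ boundary cell. I expect isolating the correct boundary equation to be the delicate point: one must recognize that the Neumann condition forces the reflected factor $2$ rather than the naive degree-$2$ relation (which would fail the verification), consistent with $\beta_0$ carrying the mean-zero eigenvalue $6$ rather than $3$.

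Finally, since at level $m$ every cell has at most one boundary corner, the interior and boundary cases exhaust all vertices of $\Gamma_m$, so the eigenvalue equation $(\deg - \lambda_m)A_C = \sum A_{C'}$ holds everywhere and $A(u')$ is a $\Gamma_m$ eigenfunction with eigenvalue $\lambda_m$. To conclude the statement as written I would add one remark that $A(u')\not\equiv 0$: because $\lambda_m$ is obtained from $\lambda_{m-1}$ by the admissible branch of the decimation relation (3.10), the averaged function genuinely realizes this eigenvalue in the $\Gamma_m$ tower rather than collapsing to zero.
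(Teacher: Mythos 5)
Your computation is correct, and I verified both cases: the interior collapse $(4-\lambda_m)S = 2S + T$, which combined with $3(A_{C_a}+A_{C_b}+A_{C_c}) = S+T$ gives the degree-$3$ equation, and the boundary reduction to $(2-\lambda_m)(6-\lambda_m) = (4-\lambda_m)^2-4$, which indeed holds identically in $\lambda_m$ (both sides are $\lambda_m^2-8\lambda_m+12$; and when $u'(q_i)=0$ the Neumann relation forces $u'(b)+u'(c)=0$ and both sides of the cell equation vanish, so that degenerate case is covered too). This is the same basic route the paper intends --- its entire proof is the remark that the lemma follows by ``a straightforward computation of the Laplacian at each point of $\Gamma_m$, using the relation $\lambda_{m-1}=\lambda_m(5-\lambda_m)$'' --- but with one genuine difference: your verification never uses that relation. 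Because both of your case computations are identities in $\lambda_m$, they assemble into the exact operator intertwining $A\,\Delta_{\beta_m} = \Delta_{\Gamma_m}A$ on \emph{all} functions, with the reflected Neumann factor $2$ at $q_i$ precisely offsetting the degree drop from $4$ to $2$, exactly as you say. The paper's phrasing suggests a computation routed through the extension formulas (the $\beta$-analogues of (3.3) and (3.5)), expressing level-$m$ values in terms of level-$(m-1)$ data, where the decimation relation is needed to simplify; your level-$m$-only verification is cleaner and buys more. In particular it applies to every Neumann eigenfunction on $\beta_m$, not only continued ones, and it yields for free the paper's later assertion that eigenvalue-$6$ eigenfunctions have zero cell averages: $6$ is not an eigenvalue of $-\Delta_{\Gamma_m}$ (the graph has maximum degree $3$, is not $3$-regular because of the three degree-$2$ boundary cells, and contains triangles so is not bipartite), so the intertwined equation forces $A(u')=0$ there.

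The one soft spot is your closing remark that $A(u')\not\equiv 0$. As written (``genuinely realizes this eigenvalue \dots rather than collapsing to zero'') it is an assertion, not an argument, and it is needed for $A(u')$ to literally be an eigenfunction. It can be closed concretely: the cell-averaging map is surjective onto functions on $\Gamma_m$, since its Gram matrix is $\tfrac{1}{9}\bigl(3I+\mathcal{A}\bigr)$ with $\mathcal{A}$ the adjacency matrix of $\Gamma_m$ (two distinct cells share at most one corner), and $-3$ is not an adjacency eigenvalue of a connected graph with maximum degree $3$ that is not $3$-regular. Hence $\dim\ker A = \tfrac{3^{m+1}+3}{2} - 3^m = \tfrac{3^m+3}{2}$, which is exactly the multiplicity of eigenvalue $6$ on $\beta_m$; since the $6$-eigenspace lies in $\ker A$ by the observation above, $\ker A$ \emph{is} the $6$-eigenspace, and a continued eigenfunction has $\lambda_m\neq 6$ (else $\lambda_{m-1}=6(5-6)=-6$), so $A(u')\neq 0$. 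To be fair, the paper's one-sentence proof glosses the same point, so this is a refinement rather than a defect peculiar to your argument.
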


\begin{proof}
The idea of the proof is straightforward: take $u$ and continue it to $u'$ via spectral decimation. Every point in $V_{m} \backslash V_{m-1}$ must satisfy the formulas outlined in (2.10)-(2.12). We define a map $A'_{m}$ analogous to that defined in (4.1) that takes functions $f: \beta_{m} \to \mathbb{R}$ to functions $A'_{m}(u) : \Gamma_{m} \to \mathbb{R}$ such that if $x$ is the vertex on $\Gamma_{m}$ corresponding to the $m$-cell $C,$ then

$$A'_{m}(f)(x) = A_{C}(u). \eqno(4.3)$$

\noindent We then compute $A'_{m}(u')$ and show that it is an eigenfunction by considering separately the cases where a given vertex has degree 3 and degree 2, making use of the eigenrelation on $\beta_{m-1}$ and the fact that $\lambda_{m-1} = \lambda_{m}(5-\lambda_{m}).$ 

There are two cases to consider: vertices on $\Gamma_{m}$ with degree 3 (which occur for $m \geq 2$) and vertices on $\Gamma_{m}$ with degree 2 (which occur for $m \geq 1$). We first consider the degree 3 case. Refer to Figure 13.

\begin{figure}
\begin{center}
\includegraphics[scale=.7]{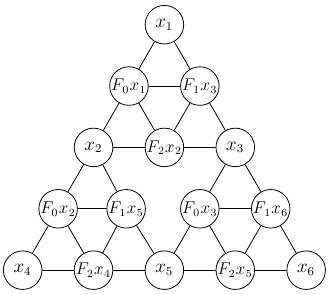}
\end{center}
\caption{A general $(m-2)$ cell of $\beta_{m}$ for $m \geq 2.$}
\end{figure}

Let $C_{1}$ be the $m$-cell with vertices $x_{1}, F_{0}x_{1}, F_{1}x_{3},$ let $C_{2}$ be the $m$-cell with vertices $F_{0}x_{1}, x_{2}, F_{2}x_{2},$ let $C_{3}$ be the $m$-cell with vertices $F_{1}x_{3}, F_{2}x_{2}, x_{3},$ and let $Z$ be the $m$-cell with vertices $x_{2}, F_{0}x_{2}, F_{1}x_{5}.$ Let $f = A_{m}(u').$ We need to show

$$(3-\lambda_{m})f(C_{2}) = f(C_{1})+f(C_{3})+f(Z).$$

\noindent By the definition of $f,$ using (2.10)-(2.12), we have

\begin{align*}
f(C_{i})&=\frac{(2-\lambda_{m})(5-\lambda_{m})u(x_{i}) + (4-\lambda_{m})(2u(x_{i})+u(x_{i+1})+u(x_{i+2}))}{3(2-\lambda_{m})(5-\lambda_{m})}\\
&+ \frac{2(u(x_{i+1})+u(x_{i+2}))}{3(2-\lambda_{m})(5-\lambda_{m})},
\end{align*}

\noindent where addition of indices is cyclic, and
\begin{align*}
f(Z)&=\frac{(2-\lambda_{m})(5-\lambda_{m})u(x_{2}) + (4-\lambda_{m})(2u(x_{2})+u(x_{4})+u(x_{5}))}{3(2-\lambda_{m})(5-\lambda_{m})}\\
&+ \frac{2(u(x_{4})+u(x_{5}))}{3(2-\lambda_{m})(5-\lambda_{m})}
\end{align*}

\noindent Thus, we have

\begin{align*}
f(C_{1})+f(C_{3})+f(Z)&=\frac{((2-\lambda_{m})(5-\lambda_{m})+(8-2\lambda_{m}))(\sum_{i=1}^{3} u(x_{i}))}{3(2-\lambda_{m})(5-\lambda_{m})}\\
&+\frac{(4-\lambda_{m})(u(x_{2})+ \sum_{j=1}^{5}u(x_{j}))}{3(2-\lambda_{m})(5-\lambda_{m})}\\
&+ \frac{2(u(x_{2})+ \sum_{j=1}^{5}u(x_{j}))}{3(2-\lambda_{m})(5-\lambda_{m})}
\end{align*}
\noindent Since $u$ is an eigenfunction on $\beta_{m-1},$ we have that

$$(4-\lambda_{m-1})u(x_{2}) = u(x_{1})+u(x_{3})+u(x_{4})+u(x_{5}) \eqno(4.4)$$

\noindent Using (4.4) and the fact that 

$$(2-\lambda_{m})(5-\lambda_{m})+(8-2 \lambda_{m}) = (3-\lambda_{m})(6-\lambda_{m}),$$ 

\noindent we see that

\begin{align*}
f(C_{1})+f(C_{3})+f(Z)&=\frac{(3-\lambda_{m})(6-\lambda_{m})(\sum_{i=1}^{3}u(x_{i}))}{3(2-\lambda_{m})(5-\lambda_{m})}\\
&+\frac{(6-\lambda_{m})(6-\lambda_{m-1})u(x_{2})}{3(2-\lambda_{m})(5-\lambda_{m})}\\
\end{align*}

\noindent Now, recalling that $\lambda_{m-1} = \lambda_{m}(5-\lambda_{m})$ by (2.9), we see that

$$(6-\lambda_{m})(6-\lambda_{m-1}) = (6-\lambda_{m})(3-\lambda_{m})(2-\lambda_{m}), \eqno(4.5)$$

\noindent thus
\begin{align*}
f(C_{1})+f(C_{3})+f(Z)&=\frac{(3-\lambda_{m})(6-\lambda_{m})(u(x_{1})+u(x_{3}))}{3(2-\lambda_{m})(5-\lambda_{m})}\\
&+\frac{(6-\lambda_{m})(3-\lambda_{m})^{2}u(x_{2})}{3(2-\lambda_{m})(5-\lambda_{m})}\\
\end{align*}

\noindent Observing the algebraic identity

$$(6-\lambda_{m})(3-\lambda_{m})^{2} = (2-\lambda_{m}(3-\lambda_{m})(5-\lambda_{m}) + (3-\lambda_{m})(8-2 \lambda_{m}),$$

\noindent we factor out a $(3-\lambda_{m})$ to obtain

\begin{align*}
f(C_{1})+f(C_{3})+f(Z)&= (3-\lambda_{m}) \left(\frac{(6-\lambda_{m})(u(x_{1})+u(x_{3}))}{3(2-\lambda_{m})(5-\lambda_{m})}\right)\\
&+ (3-\lambda_{m}) \left(\frac{(2-\lambda_{m})(5-\lambda_{m})u(x_{2})}{3(2-\lambda_{m})(5-\lambda_{m})} \right)\\
&+ (3-\lambda_{m}) \left(\frac{(4-\lambda_{m})(2u(x_{2}))}{3(2-\lambda_{m})(5-\lambda_{m})} \right) \\
&=(3-\lambda_{m})f(C_{2})
\end{align*}

For the degree 2 case, consult Figure 14.

\begin{figure}
    \centering
\includegraphics[width =.3\textwidth]{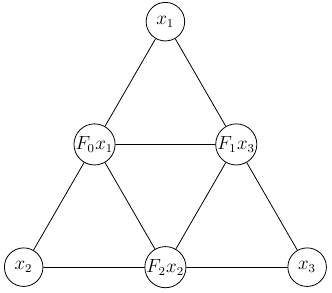}
\caption{A general $(m-1)$-cell of $\beta_{m}.$}
\end{figure}

We let $C_{i}$ be the $m$-cell containing the vertex $x_{i}.$ We need to show that

$$(2-\lambda_{m})f(C_{1}) = f(C_{2})+f(C_{3}).$$

Again, using (2.10)-(2.12), we have

\begin{align*}
f(C_{i})&=\frac{(2-\lambda_{m})(5-\lambda_{m})u(x_{i}) + (4-\lambda_{m})(2u(x_{i})+u(x_{i+1})+u(x_{i+2}))}{3(2-\lambda_{m})(5-\lambda_{m})}\\
&+ \frac{2(u(x_{i+1})+u(x_{i+2}))}{3(2-\lambda_{m})(5-\lambda_{m})}
\end{align*}

\noindent where addition of indices is cyclic. From this, we have

\begin{align*}
f(C_{2})+f(C_{3})&=\frac{((2-\lambda_{m})(5-\lambda_{m})+(8-2 \lambda_{m})+(6-\lambda_{m}))(u(x_{2})+u(x_{3}))}{3(2-\lambda_{m})(5-\lambda_{m})}\\
&+\frac{2(6-\lambda_{m})u(x_{1})}{3(2-\lambda_{m})(5-\lambda_{m})}\\
&=\frac{(6-\lambda_{m})(4-\lambda_{m})(u(x_{2})+u(x_{3}))+2(6-\lambda_{m})u(x_{1})}{3(2-\lambda_{m})(5-\lambda_{m})}\\
&=\frac{(6-\lambda_{m})(2u(x_{2})+2(u(x_{3})) + (2-\lambda_{m})(6-\lambda_{m})(u(x_{2})+u(x_{3}))}{3(2-\lambda_{m})(5-\lambda_{m})}\\
&+\frac{2(6-\lambda_{m})u(x_{1})}{3(2-\lambda_{m})(5-\lambda_{m})}
\end{align*}

\noindent From the Neumann condition (2.13), we have

$$(4-\lambda_{m-1})u(x_{1}) = 2u(x_{2})+2u(x_{3})$$

\noindent and since 

$$(6-\lambda_{m})(4-\lambda_{m-1})+2(6-\lambda_{m}) = (6-\lambda_{m})(4-\lambda_{m}(5-\lambda_{m}))+2(6-\lambda_{m})$$

\begin{align*}
(6-\lambda_{m})(2-\lambda_{m})(3-\lambda_{m})&=(2-\lambda_{m})(18-9 \lambda_{m} + \lambda_{m}^{2}) \\
&= (2-\lambda_{m})((2-\lambda_{m})(5-\lambda_{m})+2(4-\lambda_{m}))
\end{align*}

\noindent we see that

$$f(Y)+f(Z) = (2-\lambda_{m})f(X)$$

\noindent as desired.
\end{proof}
We have the following important corollary

\begin{cor}
The space $P_{m}$ of real-valued functions on $\beta_{m}$ spanned by the Neumann eigenfunctions on $\beta_{m}$ that do not have eigenvalue 6 is isomorphic as a vector space over $\mathbb{R}$ to the space of real-valued functions on $\Gamma_{m},$ with $A_{m}$ being the isomorphism. Furthermore, $A_{m}$ preserves eigenvalue in the sense that eigenfunctions on $\beta_{m}$ with eigenvalue $\lambda_{m}$ are mapped to eigenfunctions on $\Gamma_{m}$ with eigenvalue $\lambda_{m}.$
\end{cor}
\begin{proof}
We proceed by induction on $m.$ The case $m = 0$ is trivial. For the case $m=1,$ $P_{1}$ has a basis consisting of the constant eigenfunction, as well as the two eigenfunctions with eigenvalue 3 obtained by continuing the eigenbasis elements with eigenvalue 6 on $\beta_{0}$ mentioned in Section 2.6 via the spectral decimation formulas (2.9)-(2.12). Clearly $A_{1}$ takes the constant eigenfunction to the constant eigenfunction, and one can easily compute that the other two Neumann eigenbasis elements with eigenvalue 3 on $\beta_{1}$ map under $A_{1}$ to constant multiples of the eigenbasis elements with eigenvalue 3 on $\Gamma_{1}$ given in Section 3.3. By linearity, it follows that $A_{1}$ is an isomorphism.

Now, let $m \geq 2,$ and assume that $A_{m-1}$ is an isomorphism. It is easy to see that $A_{m}$ maps the Neumann eigenbasis elements with eigenvalue 5 on $\beta_{m}$ in a 1-1 correspondence with scalar multiples of the eigenbasis elements with eigenvalue 5 on $\Gamma_{m}.$ Similarly, $A_{m}$ maps the Neumann eigenbasis elements with eigenvalue 3 on $\beta_{m}$, obtained by applying the spectral decimation formulas (2.9)-(2.12) to continue the eigenbasis elements with eigenvalue 6 on $\beta_{m-1}$ down to eigenfunctions with eigenvalue 3 on $\beta_{m},$ in a 1-1 correspondence with scalar multiples of the eigenbasis elements with eigenvalue 3 on $\Gamma_{m}.$

Now, consider an eigenbasis element $g$ on $\Gamma_{m}$ with eigenvalue $\lambda_{m} \neq 3, 5.$ By the discussion in Section 3.3, we can find some eigenfunction $f$ on $\Gamma_{m-1}$ with eigenvalue $\lambda_{m-1}$ such that $g$ is the continuation of $f$ via spectral decimation given by Theorem 3.2. By the induction hypothesis, there is some Neumann eigenfunction $u$ on $\beta_{m-1}$ with eigenvalue $\lambda_{m-1}$ such that $A_{m-1}(u) = f;$ note that since $A_{m-1}$ is an isomorphism, the values of $u$ are determined by the values of $f.$ Let $u'$ be the continuation of $u'$ to $u$ via spectral decimation to an eigenfunction with eigenvalue $\lambda_{m}$ (such a continuation exists since the relations (2.9) and (3.15) are the same). Since the values of $u$ on a vertex in $\beta_{m-1}$ are determined by the values of $f,$ so are the values of $u'.$ By Lemma 4.3, we know that $A_{m}(u')$ is an eigenfunction on $\Gamma_{m}$ with eigenvalue $\lambda_{m}.$ Since the values of $u'$ depend on $f,$ so do the values of $A_{m}(u').$ Thus $A_{m}(u')$ is a continuation of the eigenfunction $f$ on $\Gamma_{m-1}$ to an eigenfunction on $\Gamma_{m}.$ Furthermore, by Theorem 3.2, such a continuation is unique. Since $g$ is such a continuation, we have that $A_{m}(u') = g.$

Thus for every element in the eigenbasis of $\Gamma_{m}$ discussed in Section 3.3, we can find an element $u \in P_{m}$ such that $A_{m}(u) = g.$ By the linearity of $A_{m},$ we can conclude that $A_{m}$ is a surjection. Since $P_{m}$ and the space of functions on $\Gamma_{m}$ both have dimension $3^{m},$ we can conclude that $A_{m}$ is an isomorphism between these two spaces. Furthermore, by the above discussion, we know that $A_{m}$ preserves eigenvalues.
\end{proof}

We now proceed with the proof of Theorem 4.2. Let $u$ be a level $m$ bandlimited function on $SG,$ so

$$u = \sum_{i=1}^{3^{m}} c_{i} u_{i},$$

\noindent where the $u_{i}$ are the first $3^{m}$ elements of the Neumann eigenbasis of $SG.$ The restriction of each $u_{i}$ to $\beta_{m},$ denoted $\left. u_{i} \right|_{\beta_{m}}$ is an element of the eigenbasis of the space $P_{m}$ mentioned in the statement of Corollary 4.4. By Corollary 4.4, the values of these restrictions are uniquely determined by their averages $A_{m}(\left. u_{i} \right|_{\beta_{m}}).$ By definition, we have

$$A_{m}(\left. u_{i} \right|_{\beta_{m}}) = A_{m}(u_{i}),$$

so the values of $\left. u_{i} \right|_{\beta_{m}}$ are determined uniquely by the discrete average $A_{m}(u_{i}).$ However, by the definition of the first $3^{m}$ elements of the Neumann eigenbasis of $SG,$ each $u_{i}$ is obtained by continuing down $\left. u_{i} \right|_{\beta_{m}}$ via spectral decimation in a unique way. Hence each $u_{i}$ is determined uniquely by $A_{m}(u_{i}).$ By the definition of $u$ and the linearity of $A_{m}$ as a map of real valued functions on $SG$ to real valued functions on $\Gamma_{m},$ we can conclude that $u$ is determined uniquely by $A_{m}(u).$

\begin{figure}
		\centering
    \begin{subfigure}[b]{0.3\textwidth}
        \centering
        \includegraphics[scale = .4]{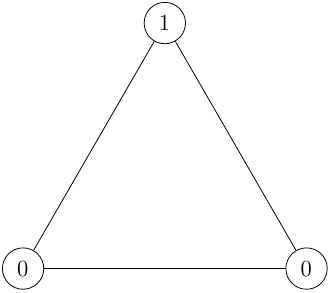}
    \end{subfigure}%
    ~ 
    \begin{subfigure}[b]{0.3\textwidth}
        \centering
        \includegraphics[scale = .4]{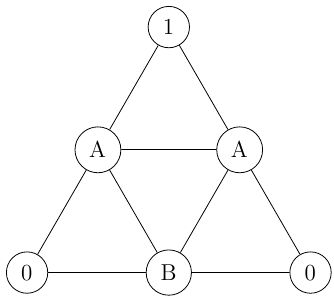}
    \end{subfigure}
    
    \caption{On the left, the function $u$ on the $k$-cell $C$ mentioned in the proof of Equation (4.6). On the right, the extension of $u$ on $C$ via spectral decimation. Here, we have $A = \frac{4-\lambda_{i}^{(k+1)}}{(2- \lambda_{i}^{(k+1)})(5 - \lambda_{i}^{(k+1)})}$ and $B = \frac{2}{(2- \lambda_{i}^{(k+1)})(5 - \lambda_{i}^{(k+1)})}.$
}
\end{figure}

To prove the theorem for the continuous averages $B_{m},$ we note that there is a simple linear relationship between the discrete and continuous averages. Suppose $u_{i}$ is one of the first $3^{m}$ Neumann eigenfunctions on $SG$ with eigenvalue $\lambda_{i}$ born on level $m_{0} \le m.$ Then there exists a sequence $\lambda_{i}^{(k)}$ for $k \geq m_{0}$ with $\lambda_{i} = \lim_{k \to \infty} \frac{3}{2}5^{k} \lambda_{i}^{(k)}$ and the restriction of $u_{i}$ to $\beta_{k+1}$ is obtained from the restriction of $u_{i}$ to $\beta_{k}$ by spectral decimation. If $C$ denotes a cell of level $k$ with $C = C_{1} \cup C_{2} \cup C_{3}$ its decomposition into three cells of level $k+1,$ we claim

$$A_{C}(u_{i}) = \frac{\left( 1 - \frac{\lambda_{i}^{(k+1)}}{6} \right)}{3 \left(1 - \frac{\lambda_{i}^{(k+1)}}{2} \right)} \left(A_{C_{1}}(u_{i}) + A_{C_{2}}(u_{i})+ A_{C_{3}}(u_{i}) \right) \eqno(4.6)$$

\noindent It suffices to verify this for the case when $u$ on $C$ has values boundary values shown in the left portion of Figure 15, whose boundary values on $C_{1}, C_{2},$ and $C_{3}$ are given by spectral decimation and shown in right portion of Figure 15. Then (4.6) follows from a direct computation. By linearity and symmetry, (4.6) holds for all boundary values on $C.$

By iteration and passing to the limit we obtain

$$B_{C}(u_{i}) = \frac{\prod_{k > m} \left(1 - \frac{ \lambda_{i}^{(k)}}{6} \right)}{\prod_{k > m} \left(1 - \frac{ \lambda_{i}^{(k)}}{2} \right)} A_{C}(u_{i}) \eqno(4.7)$$

\noindent for any $m$-cell $C,$ since $B_{C}$ is the limit of discrete averages over boundaries of $k$-cells in $C.$ We denote the coefficients in (4.7) by $b_{i},$ and note that they are nonzero. If we enumerate the $3^{m}$ $m$-cells by $C_{n},$ then the theorem for $A_{m}$ is equivalent to the invertibility of the $3^{m} \times 3^{m}$ matrix $(a_{in}) = (A_{C_{n}}(u_{i})),$ and the theorem for $B_{m}$ is equivalent to the invertibility of the matrix $(b_{im}) = (B_{C_{m}}(u_{i})).$ Since one matrix is obtained from the other by multiplication of each row by a nonzero constant, the span of the rows is the same, so the result for $A_{m}$ implies the result for $B_{m}.$
\section{Cardinal Interpolant Functions}
\subsection{Numerical Data on $SG$}
We recall that, for a set $A$, $\{\psi_{y} \vert y \in A\}$ is a set of cardinal interpolants for $A$ if

$$\psi_{y}(x) = \delta(y,x)$$

\noindent for all $x \in A$. In this paper, we are interested in cardinal interpolants for $m$-cells. Specifically, we say that $\psi$ is a cardinal interpolant function for the $m$-cell $F_{w}SG$ if the $B_{m}(\psi)$ equals 1 on the vertex of $\Gamma_{m}$ corresponding to the word $w$ and equals 0 for all other vertices. By the theorem in the previous section, we can take such functions to be bandlimited, so we can restrict ourselves to looking at functions on the $\Gamma_{m}.$ We call these bandlimited cardinal interpolant functions on $m$-cells {\bf{sampling functions}}. Figures 16-24 show all of the sampling functions for $\Gamma_{1}, \Gamma_{2}, \Gamma_{3},$ and $\Gamma_{4}.$

Having performed some numerical calculations, we can make two conjectures.

\begin{con}
The sampling functions are uniformly bounded by some constant (around 1.3) for all $m$.
\end{con}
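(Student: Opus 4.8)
The plan is to reduce the conjecture to a single uniform ``overshoot'' estimate for bandlimited interpolation, and then to control that estimate by exploiting the geometric decay of eigenvalues under spectral decimation. Recall that the sampling function $\psi_{w}$ for the $m$-cell $F_{w}SG$ is the unique level $m$ bandlimited function whose cell averages on $\Gamma_{m}$ form the indicator $e_{w}$ (value $1$ on the cell $w$ and $0$ on every other $m$-cell); writing $\|A(f)\|_{\infty}=\max_{C}|A_{C}(f)|$ over the $m$-cells, we have $\|A(\psi_{w})\|_{\infty}=1$. Thus the conjecture follows at once from the stronger statement that there is a constant $C$, \emph{independent of $m$}, with
$$\|f\|_{L^{\infty}(SG)} \le C\,\|A(f)\|_{\infty} \eqno(\star)$$
for every level $m$ bandlimited $f$, applied to $f=\psi_{w}$. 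The entire content is therefore the uniformity of the constant in $(\star)$.

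To prove $(\star)$ I would factor the sup norm through the restrictions of $f$ to the vertex graphs $\beta_{M}$. First, passing from cell averages on $\Gamma_{m}$ to vertex values on $\beta_{m}$: by the Sampling Theorem the map $A$ is a bijection on the $3^{m}$-dimensional bandlimited space, and the Lemma shows that $A$ carries the $\lambda$-eigenspace on $\beta_{m}$ onto the $\lambda$-eigenspace on $\Gamma_{m}$, so $A$ respects the spectral decomposition; one then needs only a uniform lower bound on $\|A(u)\|_{\infty}/\|u\|_{L^{\infty}(SG)}$ over the finitely many eigenfunction ``shapes'' occurring at each level. Second, I would bound $\|f\|_{L^{\infty}(SG)}$ by $\max_{\beta_{m}}|f|$ times an overshoot factor. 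Since the vertices $\bigcup_{M}\beta_{M}$ are dense and $f$ is continuous, $\|f\|_{L^{\infty}(SG)}=\sup_{M}\max_{\beta_{M}}|f|$, and the $\beta$-analogue of (3.3)--(3.5) expresses the values of $f$ on $\beta_{M+1}$ through those on $\beta_{M}$ by multipliers built from $\lambda_{M+1}=\tfrac{5-\sqrt{25-4\lambda_{M}}}{2}$. Because the continued eigenvalues obey $\lambda_{M+1}\approx\lambda_{M}/5$, every eigencomponent of a fixed level $m$ bandlimited function has graph eigenvalue shrinking geometrically as $M\to\infty$, so the extension multipliers approach the harmonic-extension weights and yield $\max_{\beta_{M+1}}|f|\le(1+c\,\rho^{\,M-m})\max_{\beta_{M}}|f|$ for some $\rho<1$. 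The telescoping product $\prod_{M\ge m}(1+c\,\rho^{\,M-m})$ converges to a bound free of $m$, which is the desired uniform overshoot. Heuristically this also matches the numerics: essentially all of the excess over $1$ is produced by the first one or two decimation steps (the ``main bump''), after which the corrections form a rapidly convergent geometric tail, consistent with the observed $10^{-9}$ residual oscillations.

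The hard part is making the per-step estimate work for a \emph{genuine} bandlimited function rather than a single eigenfunction: the extension formulas act eigencomponent by eigencomponent, and a naive triangle inequality over the $3^{m}$ components of $\psi_{w}$ destroys uniformity, so the estimate must capture the cancellation among components that produces the localization visible in the figures. Concretely there are two places where a hidden $m$-dependence can enter: (i) the lower bound $\|A(u)\|_{\infty}\gtrsim\|u\|_{\infty}$ may degenerate for the highest-frequency eigenfunctions, those whose $\beta_{m}$-eigenvalue is near the top of the band (close to $5$), whose averages can be small relative to their sup; and (ii) the first extension step $\beta_{m}\to\beta_{m+1}$ uses a non-small multiplier for exactly those top-of-band components, so the number of such components entering must not inflate the overshoot. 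I would attack both by first establishing an exponential-localization statement of the kind conjectured below: once $|\psi_{w}|$ is known to decay geometrically in the cell-graph distance from $w$, uniform boundedness is immediate, since the main bump has bounded height and the tail sums to a bounded amount independent of $m$. In my assessment the genuine obstacle is thus the same one that makes localization delicate---bounding the aggregate, rather than term-by-term, effect of the top-of-band eigencomponents under repeated spectral decimation---which is presumably why the statement is recorded here as a conjecture rather than a theorem.
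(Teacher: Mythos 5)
This statement is Conjecture 5.1 of the paper: the authors do not prove it, and offer only numerical evidence (the table of maxima, all near $1.26$--$1.27$, through level $4$). So there is no proof in the paper to compare against, and the only question is whether your argument actually settles the conjecture. It does not; it is a program with the decisive steps missing, as your own final sentence concedes.

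Concretely, three gaps. First, the reduction to $(\star)$ is a restatement, not a reduction: since $\|A(\psi_{w})\|_{\infty}=1$ by definition, the uniformity in $m$ of the constant $C$ in $(\star)$ \emph{is} the conjecture, so nothing has been gained until $(\star)$ is proved. Second, the per-step overshoot bound $\max_{\beta_{M+1}}|f|\le(1+c\,\rho^{\,M-m})\max_{\beta_{M}}|f|$ is asserted but not established for a genuine bandlimited function: the decimation formulas act eigencomponent by eigencomponent with multipliers depending on each component's eigenvalue, the eigenbasis is not well conditioned in the sup norm, and the dimension of the top-of-band eigenspaces grows geometrically (e.g.\ the $3$-eigenspace has dimension $\frac{3^{m-1}+3}{2}$), so any componentwise estimate aggregated by the triangle inequality produces a constant that grows with $m$ --- a failure you yourself identify in points (i) and (ii) without resolving it. Third, your proposed repair is circular as a proof: you would deduce uniform boundedness from exponential localization of $\psi_{w}$ in the cell-graph distance, but that localization is itself only conjectural (it is the cell-average analog of the open Conjecture 5.3 for vertex sampling functions quoted from \cite{oberlin2003sampling}), so you have reduced one open conjecture to another rather than closing either. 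Your diagnosis of the obstruction --- controlling the aggregate, not term-by-term, effect of the top-of-band components under repeated decimation --- is accurate and matches why the paper records the statement as a conjecture, but the proposal should be assessed as an outline of difficulties, not a proof.
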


\noindent This conjecture matches the case of cardinal interpolants on the real line, which are translates of the $\sinc$ function. The following conjecture, however, does not.

\begin{con}
Let $\psi_{m}$ be a sampling function on level $m$. Then

$$\int_{SG} \left| \psi_{m} \right|^{2} \le c3^{-m}$$

\noindent where $c$ is a constant around 1.2.
\end{con}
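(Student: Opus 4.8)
The plan is to compare $\psi=\psi_{m}$ directly with the indicator of its own cell, and to isolate exactly which part of $\|\psi\|_2^2$ exceeds the conjectured $3^{-m}$. Fix the $m$-cell $C_{0}=F_{w}SG$ and normalize $\psi$ with respect to the continuous average $B$, so that $B_{C}(\psi)=\delta_{C,C_{0}}$ for all $m$-cells $C$. Let $W_{m}\subset L^{2}(SG,\mu)$ be the $3^{m}$-dimensional space of functions that are constant on each $m$-cell, and let $P_{m}$ be the orthogonal projection onto $W_{m}$. Since the $m$-cells are disjoint of equal measure $3^{-m}$, one checks $P_{m}f=\sum_{C}B_{C}(f)\mathbf{1}_{C}$, hence $P_{m}\psi=\mathbf{1}_{C_{0}}$. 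Because $P_{m}$ is an orthogonal projection,
\[
\int_{SG}|\psi|^{2}=\|P_{m}\psi\|_{2}^{2}+\|(I-P_{m})\psi\|_{2}^{2}=3^{-m}+\|(I-P_{m})\psi\|_{2}^{2}.
\]
Thus the lower bound $\int|\psi|^{2}\ge 3^{-m}$ holds for free, and the whole content of the conjecture is the estimate $\|(I-P_{m})\psi\|_{2}^{2}\le(c-1)3^{-m}$ on the within-cell fluctuation of $\psi$.

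To bound this fluctuation I would apply a Poincar\'e inequality cell by cell. On $C=F_{w}SG$ the pullback $h=\psi\circ F_{w}$ satisfies $\int_{SG}|h-\bar h|^{2}\,d\mu\le\lambda_{1}^{-1}\mathcal{E}(h)$, where $\lambda_{1}$ is the first nonzero Neumann eigenvalue of $SG$; using $\mu|_{C}=3^{-m}(F_{w})_{*}\mu$, the energy scaling $\mathcal{E}(\psi\circ F_{w})=(3/5)^{m}\mathcal{E}_{C}(\psi)$, and additivity $\sum_{|w|=m}\mathcal{E}_{C}=\mathcal{E}$, summing over cells gives
\[
\|(I-P_{m})\psi\|_{2}^{2}\le\frac{3^{-m}(3/5)^{m}}{\lambda_{1}}\,\mathcal{E}(\psi).
\]
Since $\psi$ is level-$m$ bandlimited, $\mathcal{E}(\psi)=\sum_{k}\lambda_{k}|\widehat\psi(k)|^{2}\le\lambda_{3^{m}}\|\psi\|_{2}^{2}$, and the Weyl asymptotics of the Neumann spectrum (spectral dimension $2\log 3/\log 5$) give $\lambda_{3^{m}}\asymp 5^{m}$. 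The three factors $(3/5)^{m}$, $3^{-m}$ and $5^{m}$ combine to a constant, leaving $\|(I-P_{m})\psi\|_{2}^{2}\le(C_{W}/\lambda_{1})\|\psi\|_{2}^{2}$; fed back into the decomposition above this yields $\int|\psi|^{2}\le(1-C_{W}/\lambda_{1})^{-1}3^{-m}$, an estimate of exactly the conjectured order $3^{-m}$, \emph{provided} the fixed ratio $C_{W}/\lambda_{1}$ is genuinely below $1$.

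The hard part will be upgrading this to the asserted sharp constant, and even securing the bare qualitative bound. The step $\mathcal{E}(\psi)\le\lambda_{3^{m}}\|\psi\|^{2}$ is wildly lossy, since the spectral mass of $\psi$ is concentrated at low frequencies (its cell-constant part $P_{m}\psi=\mathbf{1}_{C_{0}}$ is exactly the lowest band), so establishing $C_{W}/\lambda_{1}<1$ is not automatic and the crude route will not reach $c\approx1.1$. A cleaner reformulation that I would pursue instead: writing $T=P_{m}|_{V}:V\to W_{m}$ for the restriction of $P_{m}$ to the bandlimited space $V$ (an isomorphism by the Sampling Theorem, with $T\psi=\mathbf{1}_{C_{0}}$), one has $\int|\psi|^{2}=\|T^{-1}\mathbf{1}_{C_{0}}\|^{2}\le\sigma_{\min}(T)^{-2}3^{-m}$, so the uniform form of the conjecture is \emph{equivalent} to a uniform-in-$m$ lower bound $\sigma_{\min}(T)^{2}=\cos^{2}\theta_{\max}(V,W_{m})\ge 1/c$ on the largest principal angle between the bandlimited and cell-constant subspaces. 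I expect this principal-angle bound to be the crux: it says bandlimited and cell-constant functions stay uniformly well aligned, which is precisely the mechanism behind the rapid ($\sim 10^{-9}$) decay of the sampling functions seen numerically.

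Finally, the sharp constant should be accessible through the exponential localization suggested by the preceding conjecture and the numerics: if $\psi$ agrees with $\mathbf{1}_{C_{0}}$ up to exponentially small tails, then $\|(I-P_{m})\psi\|^{2}=o(3^{-m})$, forcing $c\to 1$, and the observed $c\approx 1.1$ measures the uniformly bounded cumulative size of those tails. Making the localization quantitative and uniform in $m$ — presumably via the spectral-decimation recursion relating the level-$(m{+}1)$ sampling functions to those at level $m$, tracking how $\|\psi\|^{2}$ transforms under the factor-of-three cell subdivision — is the step I anticipate will demand the most work, and is the reason the statement is offered as a conjecture rather than a theorem.
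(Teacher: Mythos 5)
You should first note a mismatch of genre: the statement you were given is Conjecture 5.2 of the paper, and the paper offers \emph{no proof} of it --- only the numerical evidence in Table 1 (the column $3^{m}\int|\psi|^{2}$, whose values $1.078$--$1.121$ are the source of the constant ``around 1.1''). So there is no paper argument to compare against, and your proposal has to stand on its own; by your own admission it does not close. That said, part of what you prove is correct and genuinely clarifying. With the continuous normalization $B_{C}(\psi)=\delta_{C,C_{0}}$, the projection $P_{m}$ onto cell-constant functions is indeed the cellwise conditional expectation (the $m$-cells meet only in sets of $\mu$-measure zero), so $P_{m}\psi=\mathbf{1}_{C_{0}}$ and
\[
\int_{SG}|\psi|^{2}=3^{-m}+\|(I-P_{m})\psi\|_{2}^{2},
\]
which gives the unconditional lower bound $\int|\psi|^{2}\ge 3^{-m}$ and neatly explains why every tabulated value of $3^{m}\int|\psi|^{2}$ sits slightly above $1$. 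Your scaling bookkeeping is also right: $\mu$ scales by $3^{-m}$, energy by $(3/5)^{m}$, and $\lambda_{3^{m}}\asymp 5^{m}$, so the three factors cancel exactly as you claim.

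The genuine gaps are these. First, your main chain terminates in the self-improvement inequality $\|\psi\|^{2}\le 3^{-m}+\rho\|\psi\|^{2}$ with $\rho=C_{W}/\lambda_{1}$, which yields nothing at all unless $\rho<1$; the step $\mathcal{E}(\psi)\le\lambda_{3^{m}}\|\psi\|^{2}$ replaces the spectral profile of $\psi$ (whose mass is concentrated near the bottom of the band, since $P_{m}\psi=\mathbf{1}_{C_{0}}$) by the band edge, and nothing in your argument verifies the resulting comparison between the band-edge constant $\lim_{m}\lambda_{3^{m}}5^{-m}$ and $\lambda_{1}$ --- in principle this is a finite computation via the spectral decimation limits $(5-\sqrt{25-4\lambda})/2$ from the initial values $3$ and $5$, but you neither carry it out nor have a fallback if it fails. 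Second, your claimed \emph{equivalence} with the principal-angle bound $\sigma_{\min}(T)^{2}\ge 1/c$ is only one implication: the singular-value bound implies the conjecture, but the conjecture bounds $T^{-1}$ only on the orthonormal basis $\{3^{m/2}\mathbf{1}_{C}\}$, and since the images $\psi_{C}$ are far from orthogonal this controls $\|T^{-1}\|$ only through the Hilbert--Schmidt norm, with a dimensional loss of order $3^{m/2}$; the converse direction you would need for ``equivalent'' is not there. Third, the route to the sharp constant $c\approx 1.1$ leans on exponential localization that is itself only conjectural in the paper (Conjecture 5.3 and the observed $10^{-9}$ tails), so that step is circular as a proof strategy. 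In short: you have correctly proved the lower bound and reduced the conjecture to the fluctuation estimate $\|(I-P_{m})\psi\|^{2}\le(c-1)3^{-m}$, which is a sensible reformulation and a reasonable research program, but the statement remains --- as in the paper --- a conjecture.
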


In Table 1, we present numerical evidence for these conjectures.

We can compare these sampling functions on cells with the sampling functions on vertices discussed in \cite{oberlin2003sampling}. First, we remark that \cite{oberlin2003sampling} used a different notion of bandlimited, essentially using Definition 4.1 but instead focusing on the first $\frac{3^{m+1}-3}{2}$ Dirichlet eigenfunctions instead of the first $3^{m}$ Neumann eigenfunctions. In the following we refer to the notion of bandlimited in \cite{oberlin2003sampling} as vertex-bandlimited while we continue to refer to the case of Definition 4.1 as bandlimited.

With that in mind, we say that a cardinal interpolant function for $\beta_{m}$ is a function on $SG$ that is equal to 1 for a non-boundary vertex on $\beta_{m}$ and zero for every other vertex on $\beta_{m}.$ We then say a vertex sampling function on $\beta_{m}$ is a cardinal interpolant function on $\beta_{m}$ that is vertex-bandlimited. For small $m$, the vertex sampling functions have large oscillations throughout $SG,$ while the cell sampling functions do not. For larger $m$, however, we observe that both the vertex sampling functions and the cell sampling functions have the same basic oscillatory behavior: there is a big sinusoidal oscillation centered at the chosen point/cell which rapidly dies off. We have the following conjecture that is analogous to Conjecture 3.1 in \cite{oberlin2003sampling}.

\begin{con}
Let $\psi_{C}$ be the sampling function for $m$-cell $C.$ there exist constants $\alpha \approx \frac{1}{3}$ and $D,$ both independent of $C$ and $m,$ such that

$$|\psi_{C}(x)| \le D \alpha^{d_{m}(C,C_{x})}$$

\noindent where $d_{m}(C,C_{x})$ is the length of the shortest path between the vertices on $\Gamma_{m}$ that correspond to $C$ and $C_{x}.$
\end{con}

Refer to Table 2 for supporting data.
\begin{center}

  \begin{tabular}{ | l | l | l | p{5cm} |}
  \hline
  Word & m & $\max(\left| \psi \right|)$ & $3^{m}\int \left|\psi\right|^{2}$ \\ \hline
  [0] & 1 & 1.25658977540316 & 1.07842219640964 \\ \hline
  [0,0] & 2 & 1.26695538950498 & 1.07842219640964 \\ \hline
  [0,1] & 2 & 1.08571404036364 & 1.1210553607424\\ \hline
  [0,0,0] & 3 & 1.26692412467287 & 1.07839115312062\\ \hline
  [0,0,1] & 3 & 1.08594427634807 & 1.12090225373618\\ \hline
  [0,1,0] & 3 & 1.07899763862335 & 1.12043223136981 \\ \hline
  [0,1,1] & 3 & 1.07930676762662 & 1.12040455179345 \\ \hline
  [0,1,2] & 3 & 1.07936645775999 & 1.12032092081025 \\ \hline
  [0,0,0,0] & 4 & 1.26655024495913 & 1.07827446772558 \\ \hline
  [0,0,0,1] & 4 & 1.08546096243966 & 1.12070196936732 \\ \hline
  [0,0,1,0] & 4 & 1.07852883641541 & 1.12023075543376 \\ \hline
  [0,0,1,1] & 4 & 1.07882624003117 & 1.12020209908214 \\ \hline
  [0,0,1,2] & 4 & 1.07889336080433 & 1.12011942546551 \\ \hline
  [0,1,0,0] & 4 & 1.07880740875524 & 1.12019431284607 \\ \hline
  [0,1,0,1] & 4 & 1.07889107714326 & 1.12011628070581 \\ \hline
  [0,1,0,2] & 4 & 1.07889107441474 & 1.12011623345686 \\ \hline
  [0,1,1,0] & 4 & 1.07889095592936 & 1.12011621816689 \\ \hline
  [0,1,1,1] & 4 & 1.07880946182471 & 1.1201933422404 \\ \hline
  [0,1,1,2] & 4 & 1.07889096832845 & 1.12011621162118 \\ \hline
  [0,1,2,0] & 4 & 1.07889095912966 & 1.1201161798206 \\ \hline
  [0,1,2,1] & 4 & 1.07889097425727 & 1.12011622052384 \\ \hline
  [0,1,2,2] & 4 & 1.07880950409159 & 1.12019329618038 \\ \hline  
  \end{tabular}
  \captionof{table}{Numerical data for cardinal interpolants on $\Gamma_{1}, \Gamma_{2}, \Gamma_{3},$ and $\Gamma_{4}.$ Note that this information supports Conjectures 5.1 and 5.2.}
\end{center}

\begin{center}
\begin{tabular}{l|r}
Distance from $[1 2 2]$ & Bound for $|\psi_{C}|$ \\
\hline
1 & $6.079 \times 10^{-1}$ \\
2 & $1.002 \times 10^{-1}$ \\
3 & $1.919 \times 10^{-2}$ \\
4 & $5.072 \times 10^{-3}$ \\
5 & $1.087 \times 10^{-3}$ \\
6 & $3.547 \times 10^{-4}$ \\
7 & $8.403 \times 10^{-5}$ \\
\end{tabular}
\captionof{table}{Numerical data for the sampling function for the 3-cell corresponding to the word $[1 2 2].$ The table gives bounds for the sampling function depending on the distance between cells as mentioned in Conjecture 5.3}
\end{center}

\begin{figure}
    \centering
    \begin{subfigure}[b]{0.33\textwidth}
        \centering
        \includegraphics[width=\textwidth]{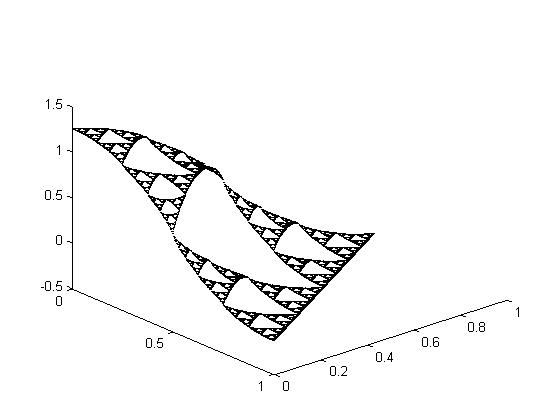}
    \end{subfigure}%
    ~ 
    \begin{subfigure}[b]{0.33\textwidth}
        \centering
        \includegraphics[width=\textwidth]{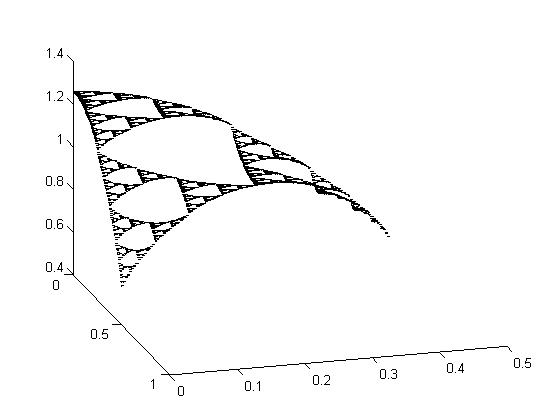}
    \end{subfigure}
    
    \caption{The unique (up to $D_{3}$ symmetries) level 1 sampling function. The bottom row features zoomed in versions of the functions on the top row.}
\end{figure}

\begin{figure}
    \centering
    \begin{subfigure}[b]{0.33\textwidth}
        \centering
        \includegraphics[width=\textwidth]{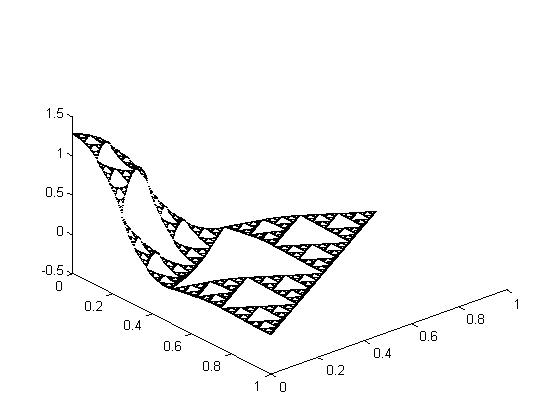}
        
    \end{subfigure}%
    ~ 
    \begin{subfigure}[b]{0.33\textwidth}
        \centering
        \includegraphics[width=\textwidth]{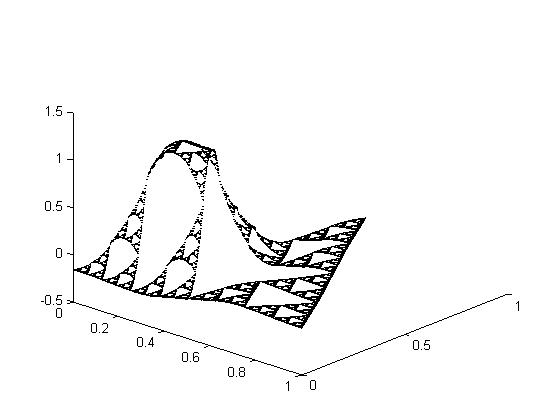}
    \end{subfigure}%

    \begin{subfigure}[b]{0.33\textwidth}
        \centering
        \includegraphics[width=\textwidth]{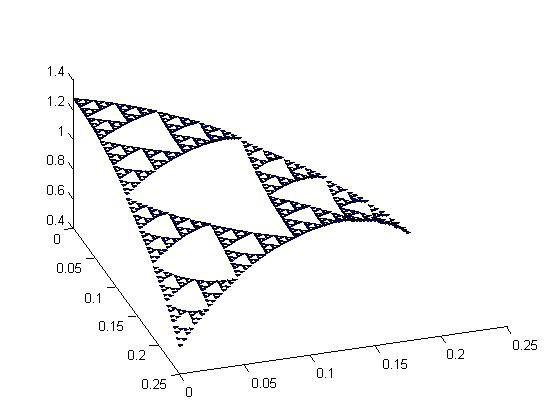}
        
    \end{subfigure}
		~
    \begin{subfigure}[b]{0.33\textwidth}
        \centering
        \includegraphics[width=\textwidth]{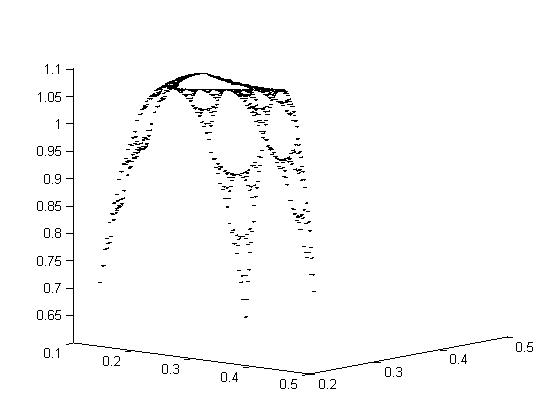}
    \end{subfigure}
    
    \caption{The unique (up to $D_{3}$ symmetries) level 2 sampling functions. The bottom row features zoomed in versions of the functions on the top row.}
\end{figure}

\begin{figure}
    \centering
    \begin{subfigure}[b]{0.33\textwidth}
        \centering
        \includegraphics[width=\textwidth]{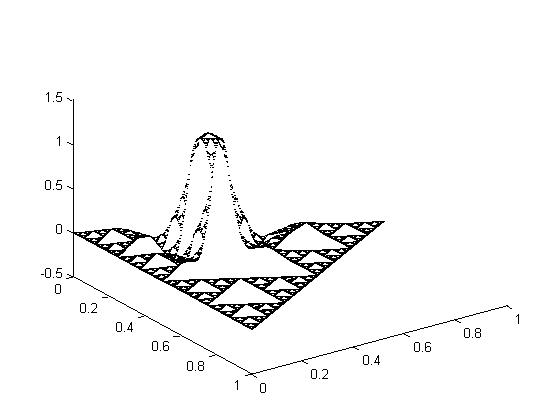}
        
    \end{subfigure}%
    ~ 
    \begin{subfigure}[b]{0.33\textwidth}
        \centering
        \includegraphics[width=\textwidth]{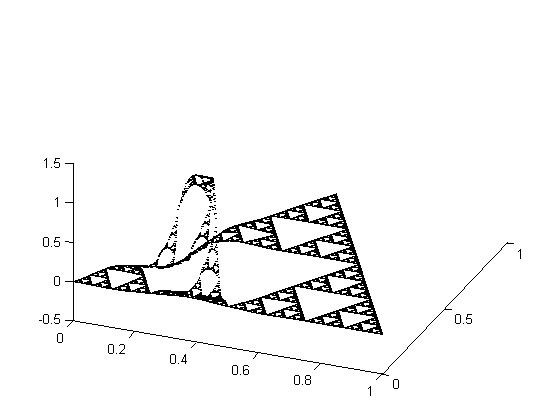}
    \end{subfigure}%

    \begin{subfigure}[b]{0.33\textwidth}
        \centering
        \includegraphics[width=\textwidth]{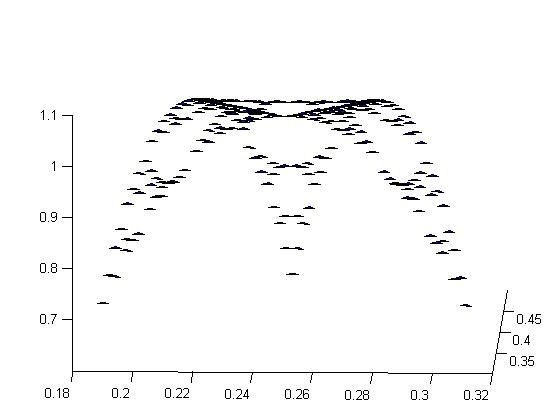}
        
    \end{subfigure}
		~
    \begin{subfigure}[b]{0.33\textwidth}
        \centering
        \includegraphics[width=\textwidth]{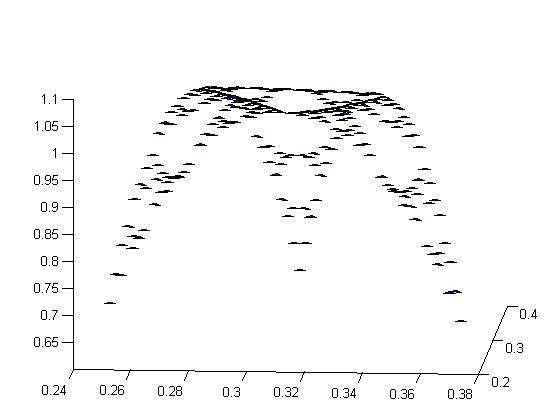}
    \end{subfigure}
    
    \caption{Some of the level 3 sampling functions.  The bottom row features zoomed in versions of the functions on the top row.}
\end{figure}

\begin{figure}
    \centering
    \begin{subfigure}[b]{0.33\textwidth}
        \centering
        \includegraphics[width=\textwidth]{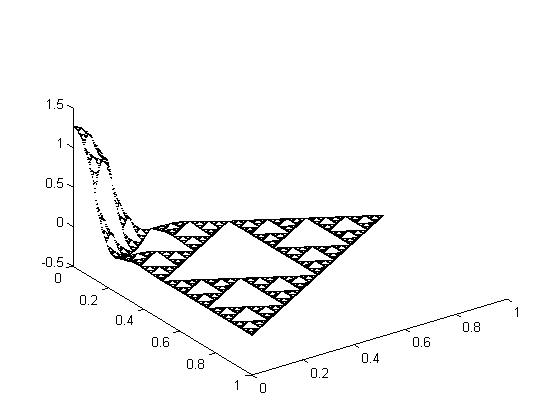}
        
    \end{subfigure}%
    ~ 
    \begin{subfigure}[b]{0.33\textwidth}
        \centering
        \includegraphics[width=\textwidth]{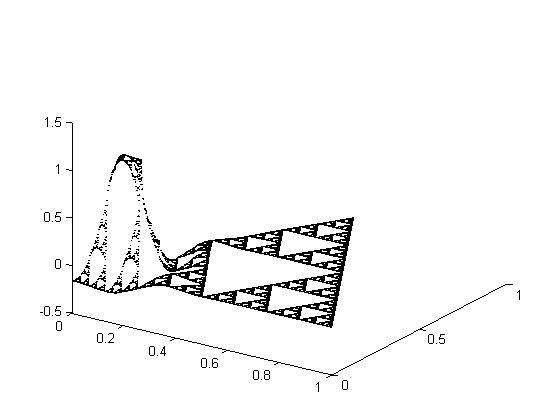}
    \end{subfigure}%
    ~
    \begin{subfigure}[b]{0.33\textwidth}
        \centering
        \includegraphics[width=\textwidth]{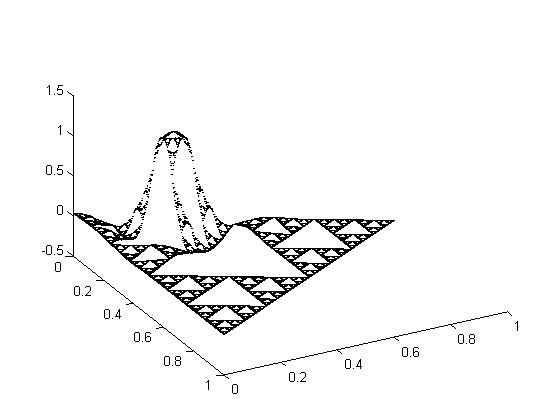}
    \end{subfigure}%
    
    \begin{subfigure}[b]{0.33\textwidth}
        \centering
        \includegraphics[width=\textwidth]{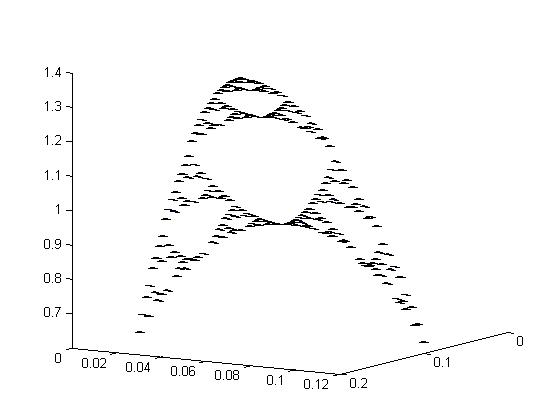}
        
    \end{subfigure}
    ~
    \begin{subfigure}[b]{0.33\textwidth}
        \centering
        \includegraphics[width=\textwidth]{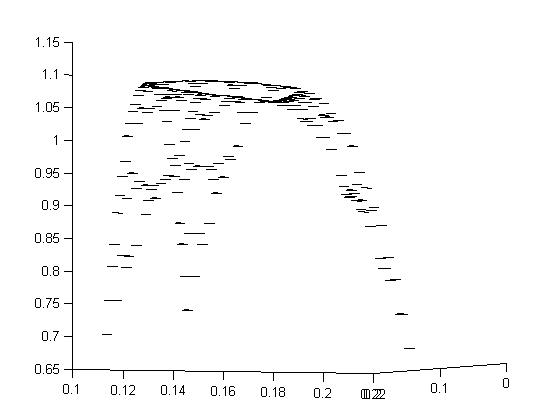}
    \end{subfigure}
    ~ 
    \begin{subfigure}[b]{0.33\textwidth}
        \centering
        \includegraphics[width=\textwidth]{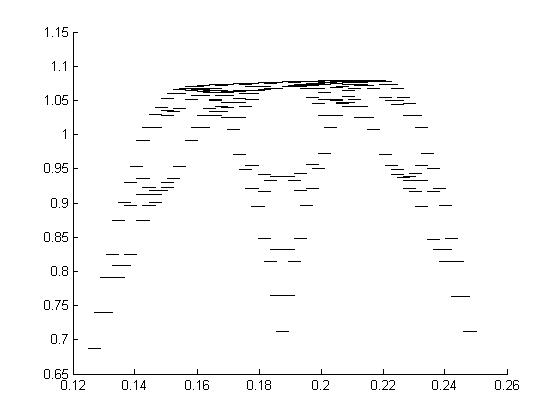}
    \end{subfigure}
    \caption{The remaining level 3 sampling functions (up to $D_{3}$ symmetries). The bottom row features zoomed in versions of the functions on the top row.}
\end{figure}

\begin{figure}
    \centering
    \begin{subfigure}[b]{0.33\textwidth}
        \centering
        \includegraphics[width=\textwidth]{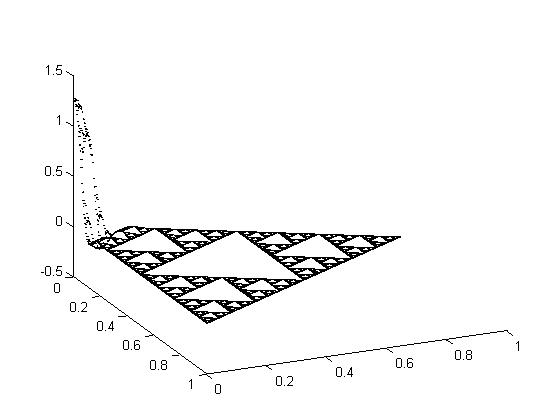}
        
    \end{subfigure}%
    ~ 
    \begin{subfigure}[b]{0.33\textwidth}
        \centering
        \includegraphics[width=\textwidth]{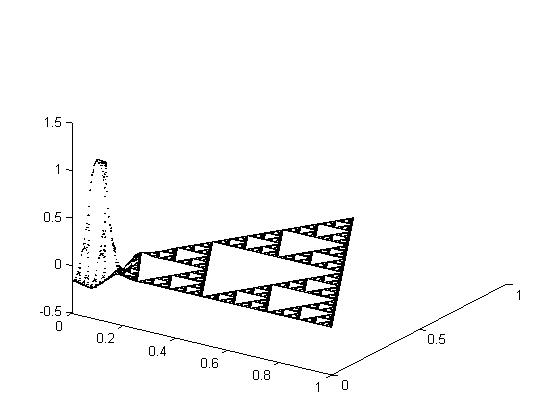}
    \end{subfigure}%
    ~ 
    \begin{subfigure}[b]{0.33\textwidth}
        \centering
        \includegraphics[width=\textwidth]{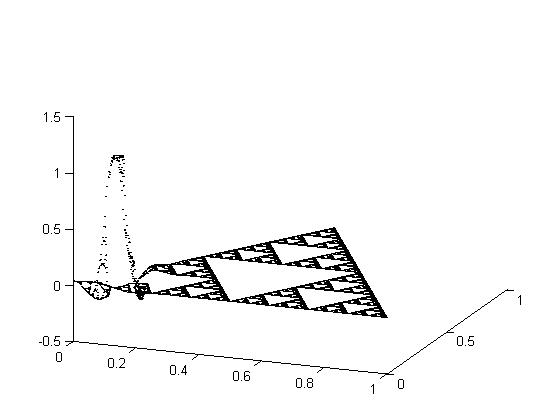}
    \end{subfigure}
    
    \begin{subfigure}[b]{0.33\textwidth}
        \centering
        \includegraphics[width=\textwidth]{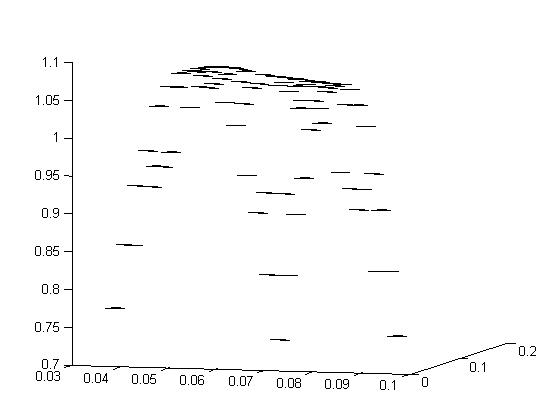}
    \end{subfigure}
    ~
    \begin{subfigure}[b]{0.33\textwidth}
        \centering
        \includegraphics[width=\textwidth]{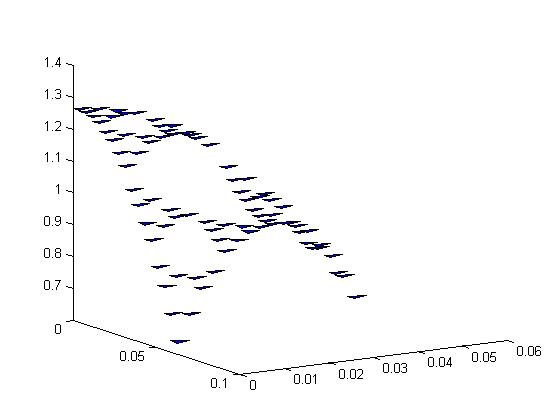}
        
    \end{subfigure}
    ~ 
    \begin{subfigure}[b]{0.33\textwidth}
        \centering
        \includegraphics[width=\textwidth]{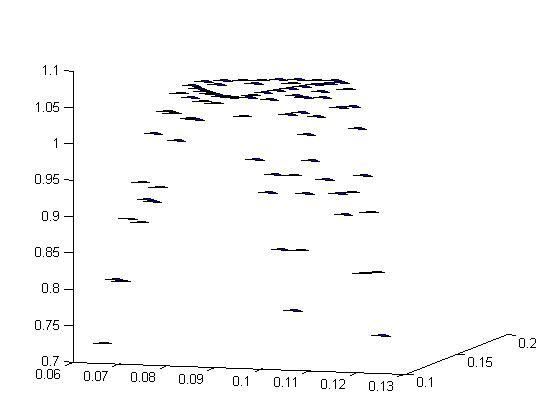}
    \end{subfigure}
    
    \caption{Some level 4 sampling functions (up to $D_{3}$ symmetries). The bottom row features zoomed in versions of the functions on the top row.}
\end{figure}

\begin{figure}
    \centering
    \begin{subfigure}[b]{0.33\textwidth}
        \centering
        \includegraphics[width=\textwidth]{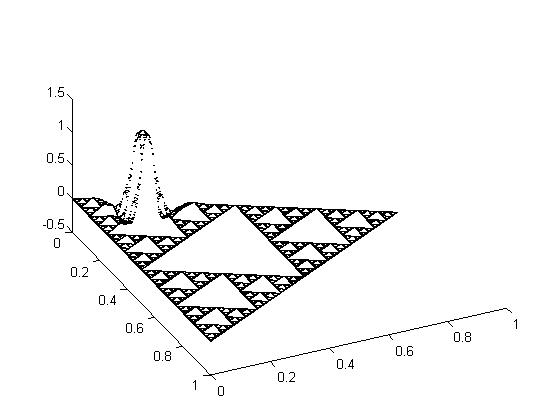}
        
    \end{subfigure}%
    ~ 
    \begin{subfigure}[b]{0.33\textwidth}
        \centering
        \includegraphics[width=\textwidth]{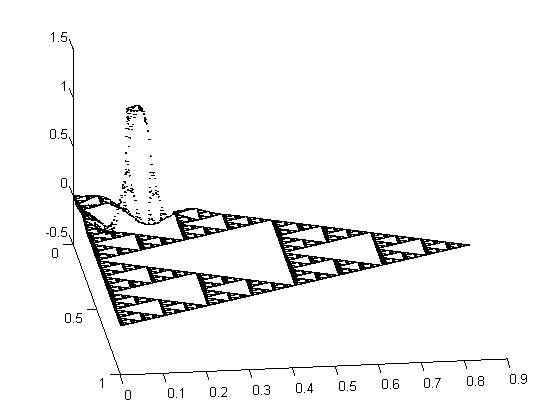}
    \end{subfigure}%
		~
    \begin{subfigure}[b]{0.33\textwidth}
        \centering
        \includegraphics[width=\textwidth]{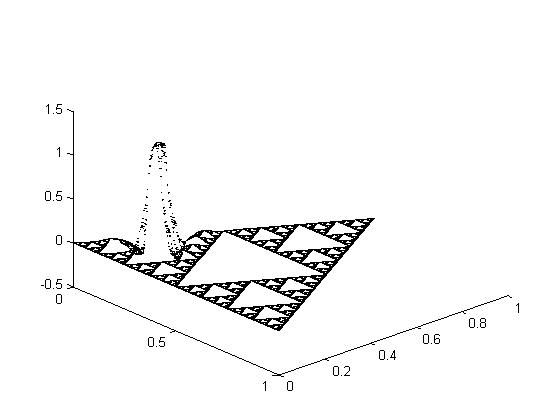}
    \end{subfigure}%

    \begin{subfigure}[b]{0.33\textwidth}
        \centering
        \includegraphics[width=\textwidth]{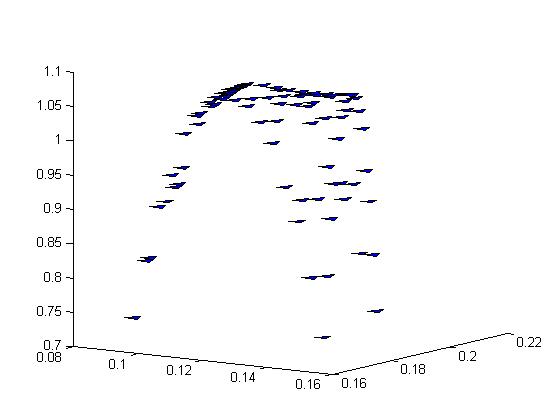} 
    \end{subfigure}
    ~
    \begin{subfigure}[b]{0.33\textwidth}
        \centering
        \includegraphics[width=\textwidth]{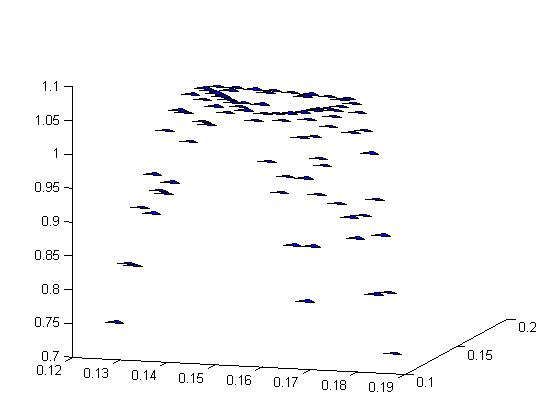}
    \end{subfigure}
    ~ 
    \begin{subfigure}[b]{0.33\textwidth}
        \centering
        \includegraphics[width=\textwidth]{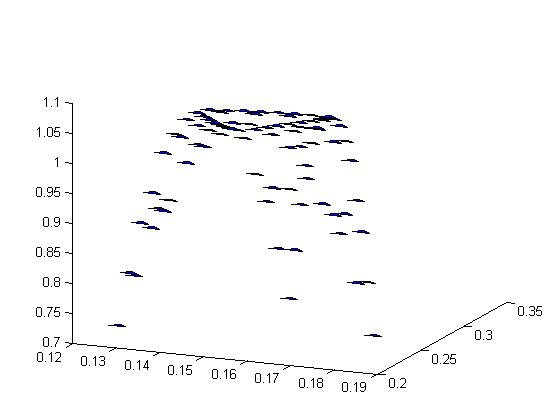}
    \end{subfigure}
    
    \caption{More level 4 sampling functions (up to $D_{3}$ symmetries). The bottom row features zoomed in versions of the functions on the top row.}
\end{figure}

\begin{figure}
    \centering
    \begin{subfigure}[b]{0.33\textwidth}
        \centering
        \includegraphics[width=\textwidth]{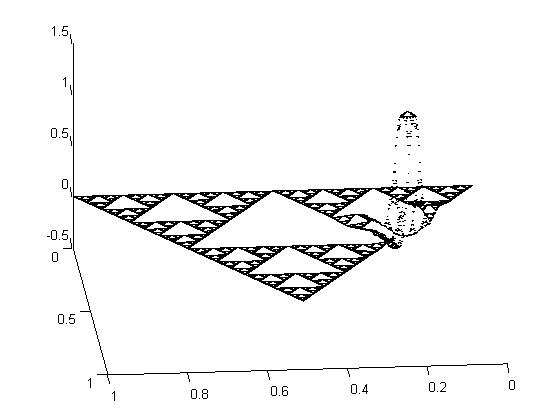}
    \end{subfigure}%
    ~ 
    \begin{subfigure}[b]{0.33\textwidth}
        \centering
        \includegraphics[width=\textwidth]{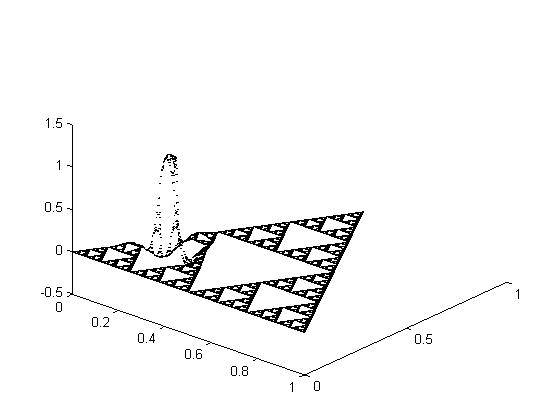}
    \end{subfigure}%
    ~
    \begin{subfigure}[b]{0.33\textwidth}
        \centering
        \includegraphics[width=\textwidth]{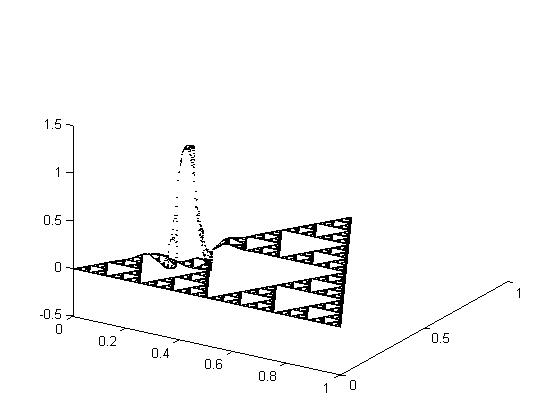}
    \end{subfigure}%
    
    \begin{subfigure}[b]{0.33\textwidth}
        \centering
        \includegraphics[width=\textwidth]{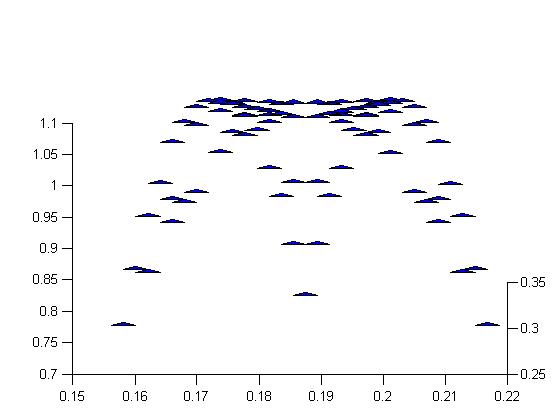}
    \end{subfigure}
    ~ 
    \begin{subfigure}[b]{0.33\textwidth}
        \centering
        \includegraphics[width=\textwidth]{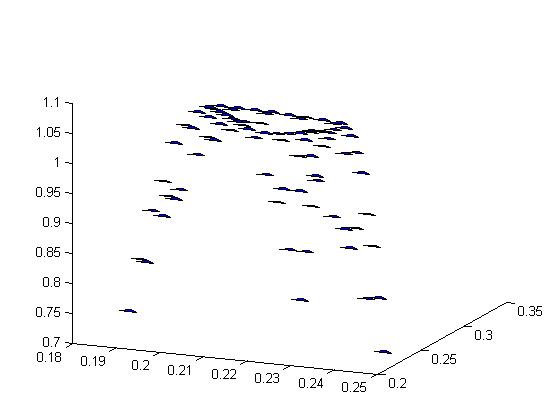}
    \end{subfigure}
    ~ 
    \begin{subfigure}[b]{0.33\textwidth}
        \centering
        \includegraphics[width=\textwidth]{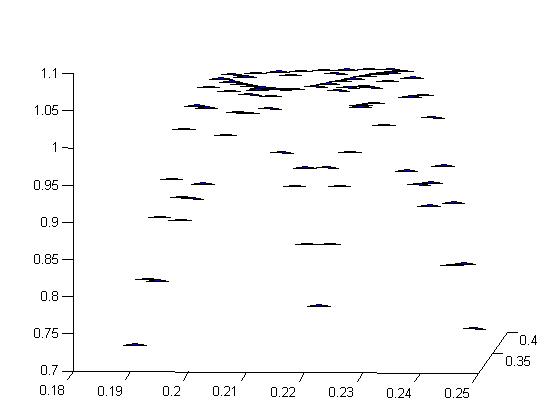}
    \end{subfigure}
    
    \caption{More level 4 sampling functions (up to $D_{3}$ symmetries). The bottom row features zoomed in versions of the functions on the top row.}
\end{figure}

\begin{figure}
    \centering
    \begin{subfigure}[b]{0.33\textwidth}
        \centering
        \includegraphics[width=\textwidth]{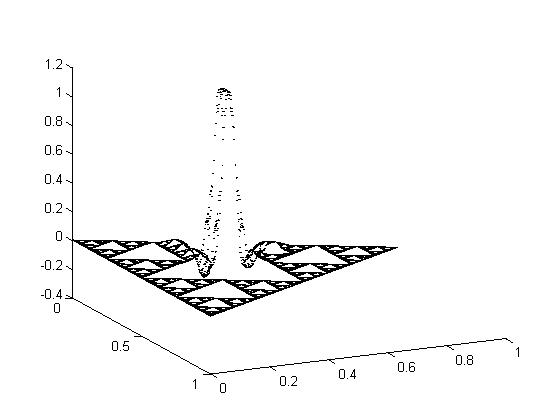}
        
    \end{subfigure}%
    ~
    \begin{subfigure}[b]{0.33\textwidth}
        \centering
        \includegraphics[width=\textwidth]{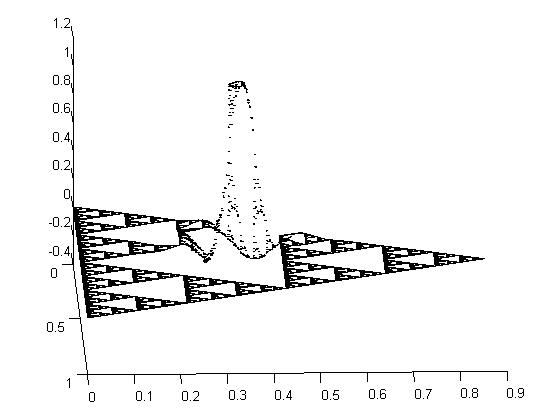}
    \end{subfigure}%
    ~ 
    \begin{subfigure}[b]{0.33\textwidth}
        \centering
        \includegraphics[width=\textwidth]{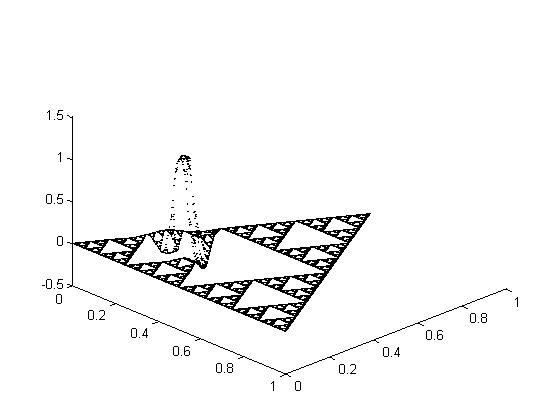}
    \end{subfigure}%
    
    \begin{subfigure}[b]{0.33\textwidth}
        \centering
        \includegraphics[width=\textwidth]{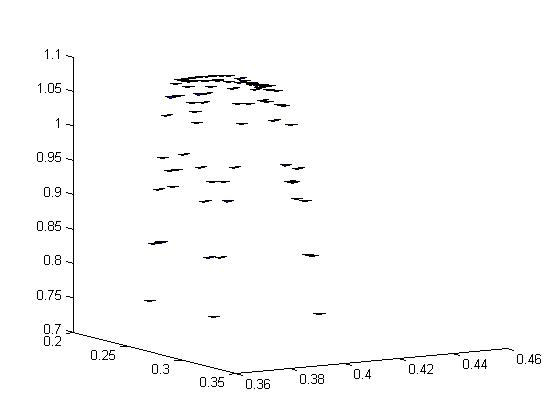}
        
    \end{subfigure}
    ~
    \begin{subfigure}[b]{0.33\textwidth}
        \centering
        \includegraphics[width=\textwidth]{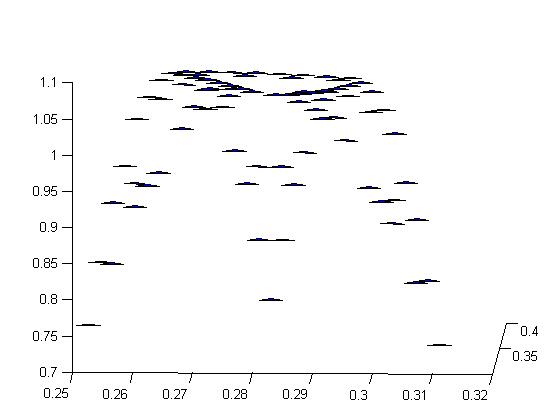}
    \end{subfigure}
    ~ 
    \begin{subfigure}[b]{0.33\textwidth}
        \centering
        \includegraphics[width=\textwidth]{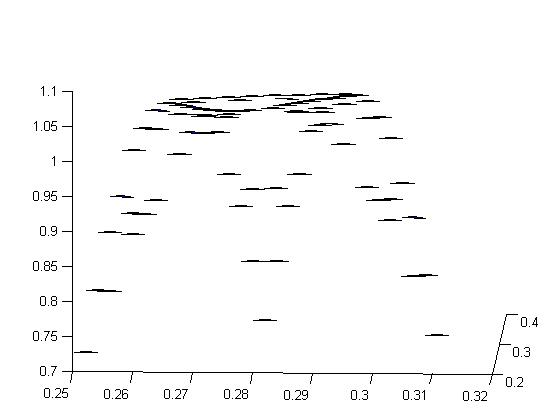}
    \end{subfigure}
    
    \caption{More level 4 sampling functions (up to $D_{3}$ symmetries). The bottom row features zoomed in versions of the functions on the top row.}
\end{figure}

\begin{figure}
    \centering
    \begin{subfigure}[b]{0.33\textwidth}
        \centering
        \includegraphics[width=\textwidth]{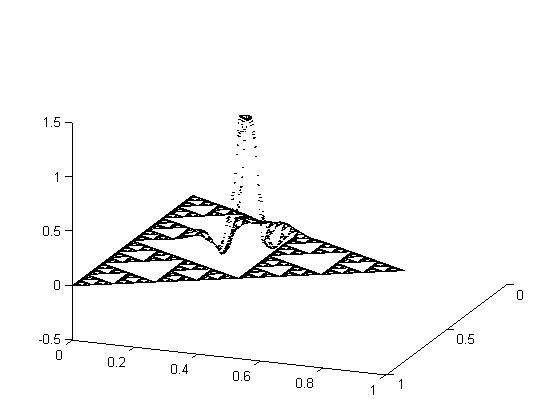}
        
    \end{subfigure}%
    ~ 
    \begin{subfigure}[b]{0.33\textwidth}
        \centering
        \includegraphics[width=\textwidth]{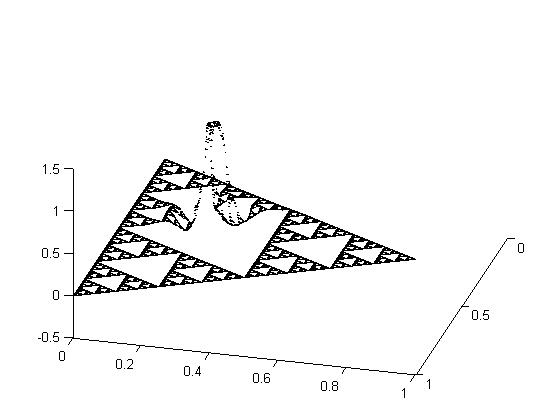}
    \end{subfigure}%

    \begin{subfigure}[b]{0.33\textwidth}
        \centering
        \includegraphics[width=\textwidth]{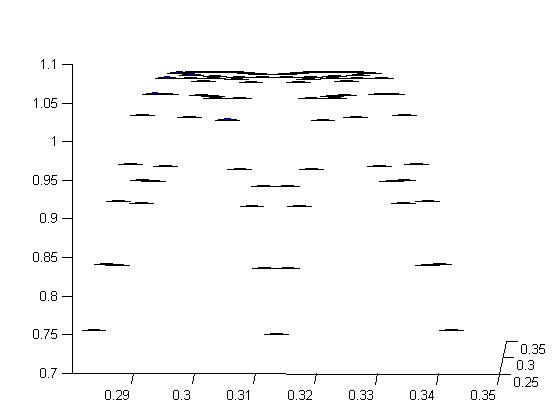}
        
    \end{subfigure}
		~
    \begin{subfigure}[b]{0.33\textwidth}
        \centering
        \includegraphics[width=\textwidth]{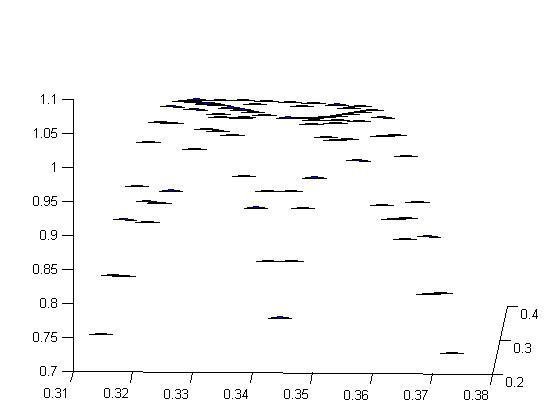}
    \end{subfigure}
    
    \caption{The remaining level 4 sampling functions (up to $D_{3}$ symmetries). The bottom row features zoomed in versions of the functions on the top row.}
\end{figure}
\subsection{Sampling on Infinite Blowups}

Let $\{i_{j}\}$ be an infinite sequence such that $i_{k} = 0,1,2 $, with the condition that two of the integers $0, 1,$ and $2$ occur infinitely often. Then, for $K = SG$, we define the infinite blow-up $K_{\infty}$ by

$$K_{\infty} = \bigcup_{m=1}^{\infty}F_{i_{1}}^{-1}...F_{i_{m}}^{-1}K.$$

\noindent and let $K_{(m)} = F_{i_{1}}^{-1}...F_{i_{m}}^{-1}K.$ Note that the sets $K_{(m)}$ are nested. Note also that for any $m \in \mathbb{Z},$ $K_{\infty}$ is an infinite union of essentially disjoint $m$-cells. The Laplacian easily extends from $K$ to $K_{\infty}.$ The spectral theory of the Laplacian on $K_{\infty}$ was described explicitly by Teplyaev \cite{teplyaev1998spectral}. See \cite{MR2246975} for a concise exposition.

The Laplacian on $K_{\infty}$ has a pure point spectrum consisting of the eigenvalues $5^{-m} \lambda_{k},$ where $\lambda_{k}$ is a Neumann eigenvalue on $K,$ each with infinite multiplicty, and each eigenspace has a basis of compactly supported eigenfunctions that are expressed as $u \circ F_{i_{1}}^{-1} \circ ... \circ F_{i_{m}}^{-1}$ extended by zero for $u$ an eigenfunction on $K$ satisfying both Dirichlet and Neumann condititions (with Dirichlet conditions being that $u = 0$ on the boundary points as usual). A function on $K_{\infty}$ is said to be bandlimited if it is a limit of linear combinations of eigenfunctions with eigenvalues at most $B.$ In particular, we want to take $B = b5^{m}$ to be the same bandlimit for level $m$ cells as in $K,$ so now we may also want to consider larger cells by allowing $m$ to be negative. We call this being $m$-bandlimited.

\begin{thm}
Assume Conjectures 5.1 and 5.2 are valid. Then for any cell $C$ of level $m$ there exists a sampling function $\psi_{C}$ that is $m$-bandlimited and $A_{C'}(\psi_{C}) = \delta_{C,C'}$ where $C'$ is any $m$-cell in $K_{\infty}.$ It follows that any $b5^{m}$-bandlimited function $f$ on $K_{\infty}$ is uniquely determined by its averages $A_{C}(f)$ over $m$-cells.
\end{thm}
\begin{proof}
Fix a cell in $K_{\infty}$. For simplicity, assume it is in $K$, say $F_{w}K = F_{w_{1}}...F_{w_{m}}K.$ Let $\psi_{0}$ be the sampling function of $F_{w}K$ in $K$ such that the average $A_{w'}(\psi_{0})$ of $\psi_{0}$ on $F_{w'}K$ equals $\delta_{ww'}$ for all $\lvert w' \rvert = m$.

Now $\psi_{0}$ has an extension to $K_{\infty}$ that is $m$-bandlimited and of compact support (modulo constant functions), since each non-constant Neumann eigenfunction on $K$ extends to a Neumann eigenfunction on $K_{\infty}$ that is supported in $K_{(n)}$ for some $n$ (this $n$ depends on the sequence $i_{1},i_{2},...$). Call the extension $\widetilde{\psi}_{0}.$ Note that not only is $\widetilde{\psi}_{0}$ bandlimited on the top, but it is also bandlimited on the bottom, because we do not add any eigenfunctions with eigenvalue below the smallest nonzero Neumann eigenvalue on $K$ except the constant term.

Note that the average of $\widetilde{\psi}_{0}$ on any $m$-cell $F_{w'}K$ of $K$ satisfies $\widetilde{\psi}_{0} = \delta_{ww'}$, but for other $m$-cells in $K_{\infty}$ we have no information.

Next we want to construct $\widetilde{\psi}_{1}$ on $K_{\infty}$ that is $m$-bandlimited and of compact support (modulo constants) such that $A_{w'} = \delta_{ww'}$ for every $m$-cell $F_{w'}K$ in $K$ and the average of $\widetilde{\psi}_{1}$ on any $m$-cell in $K_{(1)} \symbol{92} K.$ So we want to take the containment

$$F_{w}K \subseteq K \subseteq F_{i_{1}}^{-1}K$$

\noindent and apply $F_{i_{1}}$ to it:

$$F_{i_{1}}F_{w}K \subseteq K.$$

So $F_{i_{1}}F_{w}K$ is an $(m+1)$-cell in $K$, so it has an $(m+1)$-bandlimited sampling function $\varphi_{1}$, and $A_{w'}(\varphi_{1}) = \delta_{(i_{1}w)w'}.$ Let $\psi_{1} = \varphi_{1} \circ F_{i_{1}}$ defined on $K_{1} = F_{i_{1}}^{-1}K.$ Then $\psi_{1}$ has the desired averages on $m$-cells in $K_{(1)}$. Also, $\psi_{1}$ is $m$-bandlimited so there is an extension $\widetilde{\psi}_{1}$ to $K_{\infty}$ that is bandlimited and compactly supported modulo constants.

We can easily compute the constant contributions to $\widetilde{\psi}_{0}$ and $\widetilde{\psi}_{1}$ since all nonconstant eigenfunctions have total integral zero and $\int_{K} \psi_{0} = \frac{1}{3^{m}} = \int_{K_{(1)}} \psi_{1},$ so the constant is $\frac{1}{3^{m}}$ for $\widetilde{\psi}_{0}$ and $\frac{1}{3^{m+1}}$ for $\widetilde{\psi}_{1}.$

Iterating this argument, we obtain a sequence $\widetilde{\psi}_{0}$, $\widetilde{\psi}_{1}$, $\widetilde{\psi}_{2}$,... of $m$-bandlimited functions such that the $\widetilde{\psi}_{j}$ are of compact support on $K_{\infty}$ such that $A_{w}(\widetilde{\psi}_{j}) = 1$ and the average on any other $m$-cell in $K_{(j)}$ is 0

Now, assume that the two conjectures of the previous section are true. Conjecture 5.1 says that the sampling functions on $K$ are uniformly bounded, which is clearly equivalent to the $\widetilde{\psi}_{j}$ being uniformly bounded on each $K_{(j)}$. It is also easy to see that Conjecture 5.2 is equivalent to $\int_{K_{(j)}} \left| \widetilde{\psi}_{j} \right|^{2} \le c3^{-m}$ for all $j$.

Now, fix $K_{(n)}$. For $j \ge n$, the restriction of $\widetilde{\psi}_{j}$ to $K_{(j)}$ is $m$-bandlimited, so

$$\int_{K_{(j)}} \left|\Delta\widetilde{\psi}_{j}\right|^{2} \le M_{m}^{2}\int_{K_{(j)}} \left|\widetilde{\psi}_{j}\right|^{2}$$

\noindent where $M_{m}$ is the highest frequency in the $m$-band. Combining this with the obvious estimate

$$\int_{K_{(n)}} \left|\Delta\widetilde{\psi}_{j}\right|^{2} \le \int_{K_{(j)}} \left|\Delta\widetilde{\psi}_{j}\right|^{2}$$

\noindent and the aforementioned consequences of Conjecture 5.2, we have the estimate

$$\int_{K_{(n)}} \left|\Delta\widetilde{\psi}_{j}\right|^{2} \le C_{n}$$

\noindent for all $j \ge n.$ We can use this to get the H\"{o}lder estimate

$$\left|\widetilde{\psi}_{j}(x) - \widetilde{\psi}_{j}(y)\right| \le cR(x,y)^{\beta} \left( \int_{K_{(n)}} \left|\Delta\widetilde{\psi}_{j}\right|^{2} \right)^{\frac{1}{2}}$$

\noindent for all $x,y \in K_{(n)}$ for some $\beta$, where $R$ is the resistance metric. This gives us uniform equicontinuity on $K_{(n)}$.

As the sequence $\{\widetilde{\psi}_{j}\}$ is uniformly bounded and uniformly equicontinuous, we can apply Arzela-Ascoli to find a subsequence $\{\widetilde{\psi}_{j_{k}}\}$ converging uniformly on each $K_{(n)}$. Let $\psi = \lim_{k \to \infty} \widetilde{\psi}_{j_{k}}.$ Then $\psi$ is $m$-bandlimited, since if $\varphi$ is any high frequency eigenfunction, $\int \psi \varphi = \lim_{k \to \infty} \int \widetilde{\psi}_{j_{k}} \varphi = 0$. Also, the average of $\psi$ on $F_{w}K$ is 1, while the average of $\psi$ on any other $m$-cell is 0, so $\psi$ is a sampling function for $F_{w}K$ on $K_{\infty}$.
\end{proof}

\section{The Case of $SG_{3}$}

We consider the unit equilateral triangle $T$ in $\mathbb{R}^{2}$ with vertices $q_{0}, q_{1},$ and $q_{2}$ as in Section 2. We subdivide $T$ into 9 equilateral triangles of side length $\frac{1}{3},$ six of which are upward-pointing whereas the other three are downward pointing. See Figure 25.

\begin{figure}
\begin{center}
\includegraphics[scale=.6]{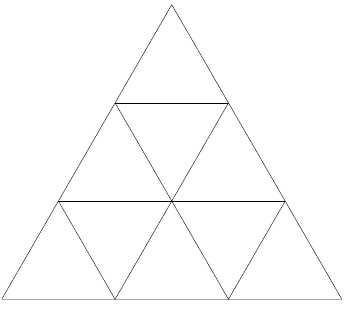}
\end{center}
\caption{$\zeta_{1}$, the first level approximation of $SG_{3}$}
\end{figure}

\noindent Let $T_{1},..., T_{6}$ be the six upward-pointing triangles of side length $\frac{1}{3}$ in Figure 24 (order doesn't matter). We let $G_{i}: \mathbb{R}^{2} \to \mathbb{R}^{2}$ be the homeomorphism that maps $T$ to $T_{i}.$ Analogous to $SG,$ we have:

\begin{defn}
$SG_{3}$ is the unique non-empty compact subset of $\mathbb{R}^{2}$ satisfying

$$SG_{3} = \bigcup_{i=0}^{5} G_{i}(SG_{3}) \eqno(6.1)$$
\end{defn}

\noindent The integral, energy, and Laplacian are defined similarly to those for $SG$. See \cite{drenning2009spectral} for complete details.

\begin{figure}[!h]
    \centering
    \begin{subfigure}[b]{0.5\textwidth}
        \centering
        \includegraphics[width=\textwidth]{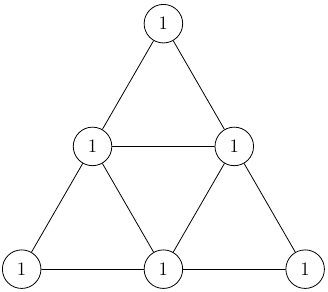}
        
    \end{subfigure}%
    ~ 
    \begin{subfigure}[b]{0.5\textwidth}
        \centering
        \includegraphics[width=\textwidth]{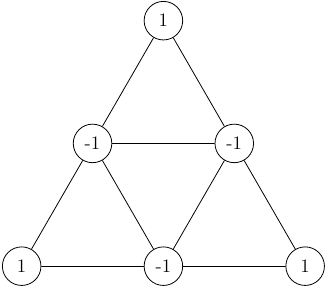}
        
    \end{subfigure}
    
    \begin{subfigure}[b]{0.5\textwidth}
        \centering
        \includegraphics[width=\textwidth]{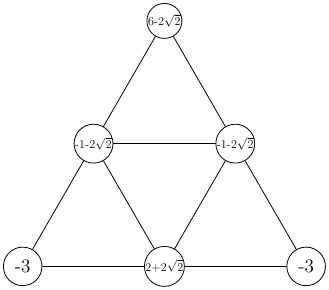}
    \end{subfigure}%
    ~ 
    \begin{subfigure}[b]{0.5\textwidth}
        \centering
        \includegraphics[width=\textwidth]{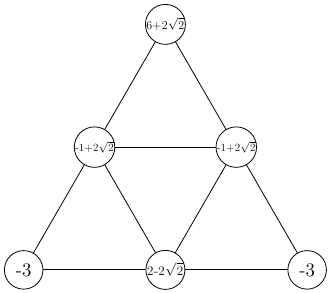}
    \end{subfigure}
    
    \begin{subfigure}[b]{0.5\textwidth}
        \centering
        \includegraphics[width=\textwidth]{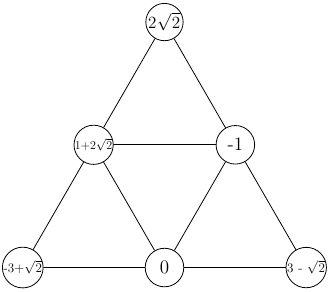}
    \end{subfigure}%
    ~ 
    \begin{subfigure}[b]{0.5\textwidth}
        \centering
        \includegraphics[width=\textwidth]{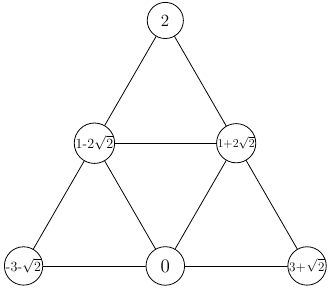}
    \end{subfigure}
    \caption{The average value functions on $\xi_{1}$ of the first six elements of the Neumann eigenbasis of $\zeta_{1}$ }
\end{figure}

The vertex graph approximations of $SG_{3}$, which we denote $\zeta_{m},$ are defined analogously to the $\beta_{m}$ for $SG.$ For $SG_{3},$ we let words of length $n$ be elements of $\mathbb{Z}_{6}^{n},$ and we keep the same convention that for a word $w = (w_{1},...,w_{n}) \in \mathbb{Z}_{6}^{n},$ $G_{w} = G_{w_{n}} \circ...\circ G_{w_{1}}.$ We let $\zeta_{0}$ be the graph associated with $T.$ For $m \geq 1,$ let

$$V_{m} = \bigcup_{i=0}^{5} G_{i}(V_{m-1}),$$

\noindent and let $\zeta_{m}$ be the graph with vertices $V_{m}$ such that $x, y \in V_{m}$ are connected by exactly one edge if and only if $x \neq y$ and $x,y \in F_{w}(T)$ for some word $w \in \mathbb{Z}_{6}^{m}$ of length $m.$ For example, $\zeta_{1}$ is given by the graph corresponding to Figure 24. If we let $V_{\ast} = \lim_{m \to \infty} V_{m},$ then $SG_{3}$ is the completion of $V_{\ast}$ in $\mathbb{R}^{2}.$

We say that the set $K$ is an {\bf{$m$-cell of $SG_{3}$}} if $K = G_{w}(SG_{3}).$ We say the graph $W$ is an $m$-cell on $\zeta_{n}$ if $W = G_{w}(\zeta_{n-m})$ for some $w \in \mathbb{Z}_{6}^{m}.$ The boundary vertices of an $m$-cell are the images of $q_{0}, q_{1},$ and $q_{2}$ under the map $G_{w}$ corresponding to the $m$-cell.

We define the average cell graphs $\xi_{m}$ by letting every vertex of $\xi_{m}$ correspond to an $m$-cell of $\zeta_{m},$ and connecting two vertices by an edge if and only if the corresponding $m$-cells share exactly one vertex in common. We see that $\xi_{0}$ is just the graph with one vertex and zero edges, and that $\xi_{1} = \beta_{1}.$ Similar to the situation with $\Gamma_{m},$ every vertex on $\xi_{m}$ splits into six different vertices on $\xi_{m+1}.$ We can define averages analogously to (2.4) and (2.5), and we force functions on $\xi_{m}$ to satisfy the immediate analogues of (3.1) and (3.2). Namely, ssume that a vertex $x$ on $\xi_{m}$ splits into six distinct vertices $w_{1},...,w_{6}$ on $\xi_{m+1}.$ Then we say that a function $g$ on $\xi_{m+1}$ is a {\bf{continuation of a function $f$ on $\xi_{m}$}} provided that

$$f(x) = \frac{1}{6} \sum_{i=1}^{6} g(w_{i}) \eqno(6.1)$$

In \cite{drenning2009spectral}, it was shown that $SG_{3}$ and the $\zeta_{m}$ possess the spectral decimation property. It is then reasonable to ask whether the $\xi_{m}$, with the usual graph Laplacian, also have the spectral decimation property. We can answer that in the affirmative.

\begin{thm}
Let $u$ be an eigenfunction on $\xi_{m}$ with eigenvalue $\lambda_{m}$. Then, $u$ can be continued to at most two eigenfunctions on $\xi_{m+1}$ with eigenvalues $\lambda^{(1)}_{m+1}$ and $\lambda^{(2)}_{m+1}$. Furthermore, for each $\lambda^{(k)}_{m+1}$, the corresponding continuation is unique. Conversely, if $u$ is an eigenfunction on $\xi_{m+1}$ with eigenvalue $\lambda^{(1)}_{m+1}$ or $\lambda^{(2)}_{m+1}$, then $u'$, the function on $\xi_{m}$ defined by

$$u'(x) = \frac{1}{6} \sum_{i=0}^{5}u(G_{i}x)$$

\noindent is an eigenfunction on $\xi_{m}$ with eigenvalue $\lambda_{m}.$ The relationship between $\lambda_{m}$ and $\lambda_{m+1}^{(k)}$ is given by

$$\lambda_{m} = \frac{3(\lambda_{m+1}-5)(\lambda_{m+1}-4)(\lambda_{m+1}-3)\lambda_{m+1}}{3\lambda_{m+1}-14} \eqno(6.2)$$
\end{thm}

\noindent The proof is analogous to that of Theorem 3.2, and will be skipped. For our analysis, the formulas for the continuation do not matter nearly as much as the relationship between the eigenvalues. The eigenvalue relation satisfied by the $\zeta_{m}$ graphs is 

$$\lambda_{m} = \frac{(\lambda_{m+1}^{2}-9\lambda_{m+1}+19)(\lambda_{m+1}-4)\lambda_{m+1}}{\lambda_{m+1}-6} \eqno(6.3)$$

\noindent So the eigenvalue relations for $\zeta_{m}$ and $\xi_{m}$ are not equal to one another. This is in stark contrast to the $SG$ case, where the relations were the same. Recall that the fact that the eigenvalue relations for $\beta_{m}$ and $\Gamma_{m}$ were the same was crucial in the proof of Lemma 4.3, which was itself needed to prove Theorem 4.2. The fact that this does not repeat for $SG_{3}$ lends one to believe that an analogue of Theorem 4.2 in this case may not exist.

In fact, we can show that the average values of the first six Neumann eigenfunctions on $\zeta_{1}$ (with first six again meaning the six smallest eigenvalues of the basis) are not all eigenfunctions on $\xi_{1}$ for any Laplacian on $\xi_{1}.$ See Figure 26 for the average values of these functions (the actual functions themselves can be found in \cite{drenning2009spectral}, but we do not need them). One can attempt to compute a Laplacian matrix for which these six functions are eigenfunctions on $\xi_{1}.$ However, by diagonalizing the Laplacian matrix, we find that all eigenvalues are necessarily equal, hence the Laplacian matrix is a multiple of the identity matrix, which is absurd.

\bibliographystyle{abbrv} 
\bibliography{sg}

\end{document}